\tikzset{
    partial ellipse/.style args={#1:#2:#3}{
        insert path={+ (#1:#3) arc (#1:#2:#3)}
    }
}
\definecolor{darkgreen}{rgb}{0,0.5,0}
\newcommand\sD{{\mathcal D}}
\newcommand\sP{{\mathcal P}}
\newcommand\sI{{\mathcal I}}
\newcommand\sM{{\mathcal M}}
\newcommand\la{\lambda}
\newcommand\e{\epsilon}
\newcommand\s{\sigma}
\newcommand\ga{\gamma}
\DeclareMathOperator{\Hom}{Hom}
\DeclareMathOperator{\divi}{div}
\DeclareMathOperator{\Sym}{Sym}
\newcommand{\CC}{\ensuremath{\mathbb{C}}}
\newcommand{\ZZ}{\ensuremath{\mathbb{Z}}}
\newcommand{\hol}{\ensuremath{\mathcal{O}}}
\newcommand{\PP}{\ensuremath{\mathbb{P}}}
\newcommand{\ra}{\ensuremath{\rightarrow}}
\def\eea{\end{eqnarray*}}
\def\bea{\begin{eqnarray*}}
\newcommand\dual{\mathrel{\raise3pt\hbox{$\underline{\mathrm{\thinspace d
\thinspace}}$}}}
\newcommand\qe{\ifhmode\unskip\nobreak\fi\quad $\Box$}       
\def\BOX{\hfill\lower.5\baselineskip\hbox{$\Box$}}
\newtheorem{theorem}{Theorem}[section]
\newtheorem{lemma}[theorem]{Lemma}
\newtheorem{corollary}[theorem]{Corollary}
\newtheorem{proposition}[theorem]{Proposition}
\newtheorem{prop}[theorem]{Proposition}
\theoremstyle{remark}
\newtheorem{remark}[theorem]{Remark}
\theoremstyle{definition}
\newtheorem{definition}[theorem]{Definition}
\newcommand{\SSS}{\ensuremath{\mathcal{S}}}
\DeclareMathOperator{\Aut}{Aut}
\DeclareMathOperator{\sgn}{sgn}
\DeclareMathOperator{\trace}{trace}
\DeclareMathOperator{\Aff}{Aff}
\numberwithin{equation}{section}
\newcounter{nootje}
\renewcommand\check[1]
\begin{document}

\title[Del Pezzo of Degree 5]{$\mathfrak S_5$-equivariant  Syzygies for the Del Pezzo Surface of Degree 5}

\author{Ingrid Bauer}
\address{Mathematisches Institut,
         Universit\"at Bayreuth,
         95440 Bayreuth, Germany.}
\email{Ingrid.Bauer@uni-bayreuth.de}

\author{Fabrizio Catanese}
\address{Mathematisches Institut,
         Universit\"at Bayreuth,
         95440 Bayreuth, Germany.}
\email{Fabrizio.Catanese@uni-bayreuth.de}
\date{\today}
\thanks{
 \textit{2010 Mathematics Subject Classification}: 13C14, 13D02, 14J25, 14J26, 14J45, 14Q10, 16E05, 20B30\\
\textit{Keywords}: Del Pezzo surfaces, Pfaffian equations, icosahedral symmetry, representations of the symmetric group in 5 letters. \\
The present work took place in the  framework  of  
the ERC-2013-Advanced Grant - 340258- TADMICAMT.  Part of the work was done while the authors were guests at KIAS,
the second author as KIAS Research scholar.}

 \makeatletter
    \def\@pnumwidth{2em}
  \makeatother

\maketitle
\addtocontents{toc}{\protect\setcounter{tocdepth}{1}}

\tableofcontents

\section{Introduction}

Del Pezzo \cite{dp} (see also \cite{conforto}, page 313) showed that the smooth nondegenerate surfaces $Y^2_n$ of degree $n$ in $\PP^n$,
except for the anticanonical embedding of $\PP^1 \times \PP^1 \ra \PP^8$, are obtained as the blow-up of the projective plane in $ (9 - n)$ points 
 of which no three are collinear, no six lie on a conic (here $ n = 3,4,5,6,7,8,9$). 
 After his classification 
these  surfaces  bear his name.

Del Pezzo surfaces  are projectively unique as soon as $n \geq 5$,
and we are  interested (see \cite{debart}) about their  defining equations,  here particularly in the case $n=5$.

It is well  known, at least since the work of Buchsbaum and Eisenbud, \cite{be} that all surfaces $Y$ in $\PP^5$ which are arithmetically Cohen-Macaulay and subcanonical 
(i.e., the canonical sheaf $\omega_Y \cong \hol_Y (r)$ for some integer $r$) are defined  by the $m$-Pfaffians of an antisymmetric 
$(2m+1)\times (2m+1)$-matrix $A = - ^t A$
of homogeneous forms (more precisely, the Pfaffians generate the ideal of polynomials vanishing on $Y$). 
The simplest nondegenerate case is the case $m=2$ of a generic $5\times 5$ antisymmetric matrix $A$ of linear forms: here $r=-1$ and we get the Del Pezzo surface $Y^2_5$
of degree $5$
as Pfafffian locus of $A$, hence defined by the $5$ quadratic equations which are the five $4 \times 4$-Pfaffians of $A$. 

It suffices to choose a generic matrix, but an explicit matrix is better, as was for instance done in \cite{schicho}. At any rate we are in this way far away from a normal form for the equations of $Y^2_5$, which might be desirable for many purposes.
What do we mean by a normal form? To explain this, recall that 
 $Y^2_5$  is indeed isomorphic to  the moduli space of ordered quintuples of points in $\PP^1$, 
and  this isomorphism   follows after  showing that its  group of automorphisms  is  $\mathfrak S_5$ ($Y^2_5$ is indeed  in bijection with the set of projective
equivalence classes of quintuples where no point of $\PP^1$ occurs with
multiplicity $\geq 3$). Hence, we would like to have equations where this symmetry shows up, and which are `invariant' for the symmetry group.

This requirement can be  precisely specified  as follows: we have the anticanonical (6-dimensional) vector space $ V : = H^0 (\omega_Y^{-1}) = H^0(Y, \hol_Y(-K_Y))$
and the 5-dimensional vector space $W$ of quadratic forms vanishing on $Y$ ($W \subset \Sym^2(V)$). Both vector spaces are representations of
the symmetric group $\mathfrak S_5$ and the $5\times 5$ antisymmetric matrix $A$ of linear forms is seen, by the Buchsbaum-Eisenbud theory,
to be an invariant  tensor 
$$  A \in ((\Lambda^2 W) \otimes V)^{\mathfrak S_5}.$$

It was known through character theory (\cite{dolgachev1} with corrections done in \cite{dolgachev2},  \cite{takagi1}, \cite{takagi}) that this invariant tensor is unique 
up to constants.

The main result of the present paper is to explicitly and canonically determine it. Now, it looks like there is no unique such representation of the tensor $A$,
because  in the end we have  still to choose a basis for both vector spaces $V$ and $W$. However,  if we take the natural  irreducible 
4-dimensional representation $U_4$ of  $\mathfrak S_5$,
this is given by the invariant subspace $x_1 + x_2 + \dots +  x_5=0$ in the 5-dimensional vector space $U'_5$ with coordinates $(x_1, \dots, x_5)$
which are naturally permuted by $\mathfrak S_5$. So, there is a natural permutation representation $ U'_5$
yielding a natural basis for $  U'_5  \supset U_4$.

In our case, we first show that there is a natural basis $s_{ij}$ ($1 \leq i \neq j \leq 3$) of $V$, showing that $V$ is the regular representation of $\mathfrak S_3$ tensored with the sign character, and then we show that the space
$W$ is a 5-dimensional invariant subspace in a natural 6-dimensional subspace $W'$ of $\Sym^2(V)$, related to a natural permutation representation of $\mathfrak S_5$
\footnote{if $\e$ is the sign character, indeed $W' \otimes \e$ is the permutation representation corresponding to double combinatorial pentagons (the cosets in $\mathfrak S_5/ \Aff(1 , \ZZ /5)$).},
and with basis $Q_{ij}$ again related to the regular representation of $\mathfrak S_3$ (the space $W$ is then generated by the differences between two such quadratic forms 
$Q_{ij}$).

This leads to a  normal form: we produce in Theorem \ref{mainresult} (which we reproduce here immediately below)  in an elegant numerical way an explicit antisymmetric $6\times 6$ matrix $A'$ with entries in $V$ (i.e., with entries  linear forms) 
whose 15 $4 \times 4$-Pfaffians  are  exactly twice the differences between the quadratic forms $Q_{ij}$.

\begin{theorem}
Let $Y$ be the del Pezzo surface of degree $5$, embedded anticanonically in $\PP^5$. Then the ideal of $Y$ is generated by the $4 \times 4$ - Pfaffians of the $\mathfrak{S}_5$ - equivariant anti-symmetric $6 \times 6$-matrix $A'=$

$$
\begin{pmatrix}
0 & s_{21}+s_{23}- & s_{12}+s_{31}- & -s_{13}-s_{21} &s_{13}+s_{32}&s_{21}+s_{32}- \\
   & -s_{31}-s_{32} &-s_{32}-s_{21}&                          &                        & -s_{12}-s_{23} \\
   \hline
-s_{21}-s_{23}+ & 0 & s_{12}+s_{23}&s_{31}+s_{23}- &s_{13}+s_{21} -&-s_{12}-s_{31} \\
+s_{31}+s_{32} &  &                        &  -s_{13}-s_{32} & -s_{23}-s_{31} & \\
\hline
-s_{12}-s_{31}+ & -s_{12}-s_{23} & 0 & s_{12}+s_{13}- & s_{12}-s_{21}+& s_{23}+s_{31} \\
s_{32}+s_{21} &                          &   &-s_{32}-s_{31} & s_{31}-s_{13} & \\
\hline
 s_{13}+s_{21} & -s_{31}-s_{23}+ & -s_{12}-s_{13}+& 0 & -s_{21} - s_{32} & s_{23}+s_{12} - \\
                        & s_{13}+s_{32} & s_{32}+s_{31} &        &                           &-s_{13}-s_{32} \\
\hline
-s_{13}-s_{32}&-s_{13}-s_{21} +& -s_{12}+s_{21}-& s_{21} +s_{32} & 0 & s_{12}+s_{13}- \\
                       & s_{23}+s_{31} & -s_{31}+s_{13} &                           &  & -s_{21}-s_{23} \\
\hline
-s_{21}-s_{32}+ &s_{12}+s_{31} &-s_{23}-s_{31} & -s_{23}-s_{12} + &-s_{12}-s_{13}+ & 0 \\
s_{12}+s_{23} &                          &                        &s_{13}+s_{23} &s_{21}+s_{23} &
\end{pmatrix}
$$
\end{theorem}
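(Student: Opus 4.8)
The plan is to verify, by a direct but structured computation, that the $15$ maximal Pfaffians of the explicit matrix $A'$ are (twice) the differences $Q_{ij}-Q_{kl}$ that generate the ideal $W$ of $Y$, and that the construction is genuinely $\mathfrak S_5$-equivariant so that the resulting surface is the (projectively unique) del Pezzo $Y^2_5$. Concretely, I would proceed in four stages.

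First, I would set up the representation-theoretic bookkeeping explicitly. Using the natural basis $s_{ij}$ ($1\le i\ne j\le 3$) of $V$ described before the theorem, one has $V\cong (\text{reg.\ rep.\ of }\mathfrak S_3)\otimes\e$, and the $6$-dimensional space $W'\subset\Sym^2(V)$ with basis $Q_{ij}$ indexed again by ordered pairs in $\{1,2,3\}$; the subspace $W$ is the kernel of the $\mathfrak S_5$-invariant linear functional $\sum Q_{ij}\mapsto\sum$ of coefficients, i.e.\ $W$ is spanned by the differences $Q_{ij}-Q_{kl}$. I would pin down precisely which $\mathfrak S_5$-action on $W'$ (resp.\ on $V$) is meant — this is forced by the earlier footnote identifying $W'\otimes\e$ with the permutation representation on combinatorial pentagons, i.e.\ cosets of $\Aff(1,\ZZ/5)\le\mathfrak S_5$ — and record the permutation action on the index set of the six rows/columns of $A'$.

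Second, with all conventions fixed, the heart of the proof is the identity: for the antisymmetric $6\times 6$ matrix $A'$ written above, the fifteen $4\times4$ Pfaffians $\mathrm{Pf}_{ab}(A')$ (obtained by deleting rows/columns $\{a,b\}^c$, equivalently the complementary pair) equal $2(Q_{ij}-Q_{kl})$ under the explicit combinatorial matching between pairs $\{a,b\}$ and the differences. I would organize this as follows: (i) observe that every entry of $A'$ is a $\ZZ$-linear combination of the $s_{ij}$ with a visible cyclic/transposition pattern, so it suffices to check one Pfaffian in each $\mathfrak S_5$-orbit; since $\mathfrak S_5$ acts transitively on the $15$ pairs and on the $15$ differences $Q_{ij}-Q_{kl}$ compatibly, checking a \emph{single} Pfaffian suffices once equivariance (Step 3) is in place. (ii) Expand that one $4\times4$ Pfaffian by the standard three-term formula $\mathrm{Pf} = a_{12}a_{34}-a_{13}a_{24}+a_{14}a_{23}$, substitute the explicit linear forms, and collect terms; compare with the expansion of the corresponding $Q_{ij}-Q_{kl}$ in the monomials $s_{ij}s_{kl}$. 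I expect this collation — matching a dozen-odd quadratic monomials with signs — to be the one genuinely laborious spot, best relegated to a short table or a remark that it was checked symbolically.

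Third, I would establish the $\mathfrak S_5$-equivariance of $A'$, i.e.\ that $A'\in((\Lambda^2 W)\otimes V)^{\mathfrak S_5}$ — equivalently that under the permutation action on the row/column index set $\{1,\dots,6\}$ and the induced action on $V$, each transposition (it suffices to check the Coxeter generators, or a generating pair such as a $5$-cycle and a transposition) carries $A'$ to $\epsilon(g)\,P_g^{\,t}A'P_g$ with the appropriate sign, matching the known fact that the invariant tensor in $((\Lambda^2 W)\otimes V)^{\mathfrak S_5}$ is $1$-dimensional. This is again a finite check on the displayed matrix; the sign bookkeeping (the twist by $\e$ in $V\cong\text{reg}\otimes\e$, and the sign coming from reordering rows) is the subtle part, and I would be careful that the conventions chosen in Step 1 make it come out consistently.

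Finally, I would assemble the conclusion. By Buchsbaum--Eisenbud \cite{be}, the $4\times4$ Pfaffians of a generic antisymmetric $5\times5$ matrix of linear forms cut out $Y^2_5$; here instead we use a $6\times6$ matrix, so I would invoke (or reprove in a line) that the fifteen $4\times4$ Pfaffians of $A'$ span the same $5$-dimensional space $W$ of quadrics — which is exactly the content of Step 2 — and that $W$ generates the homogeneous ideal of the anticanonically embedded del Pezzo surface of degree $5$, a surface which is projectively unique. Hence $V(W)=Y$ and the ideal of $Y$ is generated by these Pfaffians, which is the assertion. The main obstacle is really Step 2 together with the sign/convention consistency threaded through Steps 1 and 3; none of it is deep, but it must be done with the bases and group actions nailed down exactly as in the preceding sections.
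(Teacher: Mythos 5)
Your plan is correct and rests on the same logical skeleton as the paper's proof --- the ideal of $Y$ is generated by the five-dimensional space $W=H^0(\sI_Y(2))$ of quadrics, the Buchsbaum--Eisenbud resolution forces an antisymmetric $A\in(\wedge^2 W\otimes V)^{\mathfrak S_5}$, and one must match Pfaffians with the differences $Q_{ij}-Q_{kl}$ --- but you connect the displayed matrix to the abstract invariant tensor in the opposite direction. The paper \emph{derives} the entries: in Proposition \ref{invariantmatrix} it writes $A_{12,13}=\sum\alpha_{mn}s_{mn}$, uses the fact that $(4,5)$ and $(2,3)$ preserve the pair $\{Q_{12},Q_{13}\}$ up to sign to force $\alpha_{12}=\alpha_{13}=0$ and $\alpha_{21}=\alpha_{23}=-\alpha_{31}=-\alpha_{32}$, normalizes $\alpha_{21}=1$, and propagates to all other entries by applying generators of $\mathfrak S_5$; equivariance is then automatic because the matrix is constructed from it, and the uniqueness statement $\dim(\wedge^2 W\otimes V)^{\mathfrak S_5}=1$ does real work. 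You instead take the matrix as given, \emph{verify} equivariance on a generating set, and compute Pfaffians; your further observation that $\mathfrak S_5$ acts transitively on the $15$ unordered pairs of the six indices (the exceptional action on the cosets of $\Aff(1,\ZZ/5)$ is $3$-transitive), so that a single Pfaffian computation plus equivariance yields all fifteen up to sign, is a genuine economy over the paper, which computes all fifteen Pfaffians directly. The trade-off is that your route front-loads a delicate sign verification (the twist by $\e$ on $W'$ and the reordering sign on $\wedge^2W'$), which you rightly flag as the danger spot, whereas the paper's route explains where the matrix comes from and never needs to check equivariance of a matrix produced ex nihilo. One small imprecision: the sum-of-coefficients functional on $W'$ cutting out $W$ is not $\mathfrak S_5$-invariant but transforms by the sign character (the paper's $W'=W\oplus\e$), though this does not affect your identification of $W$ as the span of the differences. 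Both routes rely on the same external inputs you correctly isolate: that the anticanonical ideal of $Y^2_5$ is generated by its quadrics (the start of the paper's Hilbert resolution) and the Buchsbaum--Eisenbud Pfaffian structure theorem.
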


The action of the symmetric group $\mathfrak{S}_5$ on the entries of the matrix is given by the action on the vector space
$V$, with basis $s_{ij}$, $i \neq j, 1 \leq i,j\leq3$, which is  described 
in the proof of theorem \ref{v}, tensored with the sign character $\e$ (we shall denote $V' : = V \otimes \e$ and observe
here  that $V$, $V'$ are isomorphic $\mathfrak{S}_5$ representations).  The action  on the matrix is determined
by the action of $\mathfrak{S}_5$ on $ W' \otimes W'$, in turn determined by the action of $\mathfrak{S}_5$
on the vector space $W'$ with basis the quadrics $Q_{ij}$, $i \neq j, 1 \leq i,j\leq3$. 
The latter action  (which is explicitly described in remark \ref{formulae}) is a consequence of the former, since
$Q_{ij} = s_{ij} \s_{ij}$, where the $\s_{ij}$'s are defined in section 1 and are as follows: $
\sigma_{ij} = \epsilon (s_{ik}-s_{ki}-s_{jk})$.

In practice, the action of two  generators of $\mathfrak{S}_5$ is explicitly given as follows.

The transposition $\tau:=(1,2)$ acts as
\begin{itemize}
\item $s_{ij} \mapsto s_{\tau(i)\tau(j)}$;
\item $Q_{ij} \mapsto -Q_{\tau(i)\tau(j)}$.
\end{itemize}

The cycle $\varphi:=(1,2,3,4,5)$ acts as

$$
s_{12}   \mapsto   s_{31}, \ s_{13}  \mapsto \sigma_{12}=s_{13}- s_{31}-s_{23},  \ s_{21}  \mapsto  s_{21},
$$
$$
s_{23}   \mapsto  -\sigma_{13}=s_{12}-s_{21}-s_{32}, \  s_{31} \mapsto  -\sigma_{32}=s_{13} -s_{31}+s_{21},  \ s_{32}  \mapsto  -\sigma_{23}=s_{12} -s_{21}+s_{31},
$$
and
$$
Q_{12}   \mapsto   Q_{31}, \ Q_{13}  \mapsto Q_{12},  \ Q_{21}  \mapsto  Q_{21},
$$
$$
Q_{23}   \mapsto  Q_{13}, \  Q_{31} \mapsto  Q_{32},  \ Q_{32}  \mapsto  Q_{23}.
$$

Our main theorem  is also interesting because, even if we work sometimes using character theory and over a field of characteristic $0$, in the end we produce equations which
define the Del Pezzo surface of degree $5$ over any field of characteristic different from 2 (in this case symmetric and antisymmetric tensors coincide,
and the situation should be treated separately).

We believe that our approach is also interesting from another point of view (as already announced in \cite{takagi1} and \cite{takagi}):
the symmetries of $Y^2_5$ bear some similarity to those of the icosahedron, as it is well known (\cite{dolgachev1}, \cite{cheltsov}), and the irreducible representations
of  $\mathfrak S_5$ can be explicitly described via the geometry of $Y^2_5$. 
For instance, the representations $V$, $W$ are irreducible, and together with the natural representation $U_4$ described above,
they and their tensor product with the sign representation $\e$  yield all irreducible representations of $\mathfrak S_5$  ($V$ is the only one such
that $ V \cong V \otimes \e$).

We also give  explicit descriptions of the irreducible representations of $\mathfrak S_5$ via the geometry of $Y^2_5$.
While it is easy to describe $U_4$ and $W \otimes \e $ through natural permutation representations on  
 $\mathfrak S_5 / \mathfrak S_4$,
respectively on $\mathfrak S_5 /\Aff(1, \ZZ/5)$, we give another realization of   the anticanonical vector space  $V$,
$$V =  H^0 (\omega_{Y^2_5}^{-1}) = H^0 (\hol_{  Y^2_5} (- K_{  Y^2_5}))  = H^0 (\hol_{  Y^2_5} (1)) , $$
through an explicit  permutation representation of dimension $24$, the one on the set of oriented combinatorial pentagons (the cosets in $\mathfrak S_5 /( \ZZ/5)$).
This is interesting, because we have then a bijection between the set of the 12 (unoriented, nondegenerate) combinatorial pentagons and the set of the 12 geometric 
nondegenerate pentagons
contained in $Y$, and formed with quintuples out of the 10 lines contained in $Y$ (these geometric pentagons are hyperplane sections of $Y$).

Finally, in a previous paper \cite{debart} we have written explicit equations of Del Pezzo surfaces in products $(\PP^1)^{h }$,
and here, for the Del Pezzo of degree $5$, we give in Theorem \ref{product} $\mathfrak S_5$-invariant equations inside $(\PP^1)^{5 }$.

An interesting phenomenon is that the minimal (multigraded) Hilbert resolution for this embedding of the Del Pezzo of degree $5$ is exactly the same as for the 
natural embedding in $(\PP^1)^{5 }$ of the Del Pezzo surfaces of degree $4$, so that we have an example of a  reducible Hilbert scheme.


\section{Symmetries and the anticanonical system of the Del Pezzo surface of Degree 5}\label{section1}

We  recall some notation introduced in \cite{takagi}, as well as   some  intermediate results established there.

 The Del Pezzo surface $Y: = Y^2_5$ of degree $5$ is the blow-up  of the plane $\PP^2$ in the $4$ points $p_1, \dots, p_4$
  of a projective basis, that is, we choose 
  $$
  p_1 =  (1:0:0), \ p_2=(0:1:0), \ p_3 = (0:0:1), \ p_4 = (1:1:1) = (-1:-1:-1).
  $$
Observe that $p_i$ corresponds to a vector $e_i$, for $i=1,2,3,4$ , where $e_1, e_2, e_3$ is a basis of a vector space $U'$, and $e_1  + e_2 + e_3 + e_4 = 0$.
In other words, $\PP^2 = \PP (U')$, where $U'$ is the natural irreducible representation of $\mathfrak S_4$.

As already mentioned in the introduction, the Del Pezzo surface $Y$  is indeed  the moduli space of ordered quintuples of points in $\PP^1$, 
and its  automorphism group  is isomorphic to $\mathfrak S_5$.

The obvious action of the symmetric group $\mathfrak S_4$ permuting the $4$ points extends in fact to an action
of the  symmetric group $\mathfrak S_5$. 

This can be seen as follows. The six lines in the plane joining pairs $p_i, p_j$ can be labelled as $L_{i,j} $, with $ i,j \in \{1, 2,3,4\}, i \neq  j $.

Denote by $E_{i,5}$ the exceptional curve lying over
the point $p_i$, and denote, for $i \neq   j \in \{1, 2,3,4\}$, by $E_{h,k} = E_{k,h}$ the strict transform in $Y$ of the  line $L_{i,j}$, if $\{1, 2,3,4\}=  \{i, j, h,k\}$.
For each choice of $3$ of the four points, $\{1, 2,3,4\} \setminus \{h\}$, consider the standard Cremona transformation $\s_h$
based on these three points. To it we associate the transposition $(h,5) \in \mathfrak S_5$, and the upshot is that
$\s_h$ transforms the $10$ $(-1)$ curves $E_{i,j}$ via the action of $(h,5) $ on pairs of elements in $\{1, 2,3,4,5\}$. 

There are  five geometric objects permuted by $\mathfrak S_5$: namely, $5$ fibrations $\varphi_i : Y \ra \PP^1$,
induced, for $1 \leq i \leq 4$, by the projection with centre $p_i$, and, for $i=5$, by the pencil of conics through
the $4$ points. Each fibration  is a conic bundle, with exactly three singular fibres, corresponding to the possible partitions of
type $(2,2)$ of the set $\{1, 2,3,4,5\} \setminus \{i\}$.

The intersection pattern of the curves $E_{i,j}$, which generate the Picard group of $Y$
is dictated by the simple rule (recall that $ E_{i,j}^2 = -1, \ \forall i \neq j$)
$$ E_{i,j} \cdot E_{h,k} = 1 \ \Leftrightarrow \{i,j\} \cap \{h,k\}= \emptyset, \ \ E_{i,j} \cdot E_{h,k} = 0   \Leftrightarrow \{i,j\} \cap \{h,k\}\neq  \emptyset, \{i,j\}.$$
In this picture the three singular fibres of $\varphi_1$ are $$E_{3,4} +  E_{2,5}, \ E_{2,4} +  E_{3,5}, \ E_{2,3} +  E_{4,5}.$$

The relations among the $E_{i,j}$'s in the Picard group come from the linear equivalences 
$E_{3,4} +  E_{2,5} \equiv E_{2,4} +  E_{3,5} \equiv  E_{2,3} +  E_{4,5}$ and their $\mathfrak S_5$-orbits.

An important observation is that  $Y$ contains exactly ten lines (i.e.,  ten
irreducible curves $E$ with $ E^2 = E K_Y = -1$), namely the lines $E_{i,j}$. 

Their intersection pattern is described by the Petersen graph, whose vertices correspond to the curves 
$E_{i,j}$, and whose edges correspond to intersection points (cf. figure \ref{fig1}).

\begin{center}
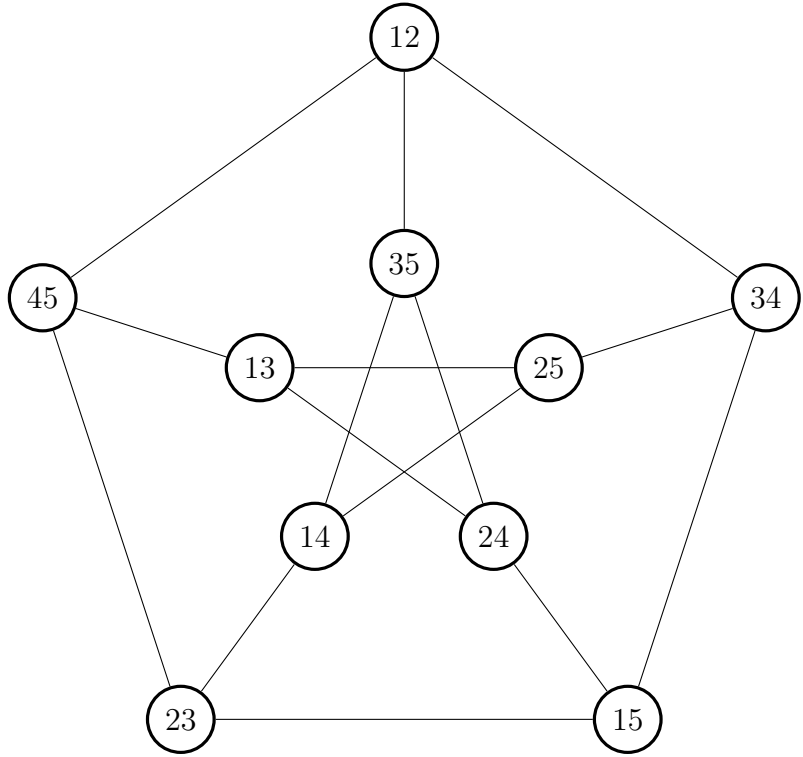
\begin{figure}
\begin{tikzpicture}[every node/.style={draw,circle,very thick}]
    \draw (18:5cm) node[label=18:] (2){34} 
     (90:5cm) node[label=90:] (3){12} 
     (162:5cm) node[label=162:] (4){45} 
      (234:5cm) node[label=234:] (5){23} 
     (306:5cm) node[label=306:] (1){15} ;
    
\draw (18:2cm) node[label=18:] (7){25} 
     (90:2cm) node[label=90:] (8){35} 
     (162:2cm) node[label=162:] (9){13} 
      (234:2cm) node[label=234:] (10){14} 
     (306:2cm) node[label=306:] (6){24}  (7);

\draw (1) -- (2);     
\draw (2) -- (3);
\draw (3) -- (4);
\draw (4) -- (5);
\draw (5) -- (1);

\draw (1) -- (6);
\draw (2) -- (7);
\draw (3) -- (8);
\draw (4) -- (9);
\draw (5) -- (10);

\draw (6) -- (8);
\draw (6) -- (9);
\draw (7) -- (9);
\draw (8) -- (10);
\draw (10) -- (7);
    
\end{tikzpicture}
\caption{The Petersen graph (the quotient of the dodecahedron via the antipodal map), which is the  dual 
graph to the incidence correspondence of the lines on the quintic Del Pezzo surface.}\label{fig1}

\end{figure}
\end{center}

\begin{remark} \
1) The action of $\Aut(Y) \cong \mathfrak{S}_5$ can be described as the action on $\PP^2$ by the following (birational) transformations:
\begin{itemize}
\item
the above vectorial action of $ \mathfrak{S}_4$ on $U'$
(by which  $\mathfrak{S}_4 \leq \mathfrak{S}_5$ acts on $\PP^2$ permuting $p_1,p_2,p_3,p_4$), i.e., in coordinates,
\item for $\sigma \in \mathfrak{S}_3$
$$
\sigma (x_1:x_2:x_3) = (x_{\sigma(1)}:x_{\sigma(2)}:x_{\sigma(3)}),
$$
and for $\sigma = (3,4)$:
$$
\sigma (x_1:x_2:x_3) = (x_1-x_3:x_2-x_3:-x_3),
$$

\item the transposition $(4,5)$ is the standard Cremona transformation 
$$(x_1:x_2:x_3) \mapsto (\frac{1}{x_1}:\frac{1}{x_2}:\frac{1}{x_3}) = (x_2x_3:x_1x_3:x_1x_2).
$$

\end{itemize}
2) The equation of the  six lines of the complete quadrangle in $\PP^2$ (the lines $L_{i,j}$ joining $p_i$ and $p_j$)
$$
\Sigma := x_1x_2x_3(x_1-x_2)(x_2-x_3)(x_3-x_1)
$$
yields an eigenvector for the sign representation $\e : \mathfrak{S}_5 \ra \{ \pm 1\}$.

In fact, the proper transform of $ \divi (\Sigma)$ is the sum of the 10 lines $E_{i,j}$ contained in $Y$, and   it is 
a divisor in $| - 2 K_Y|$  invariant for any automorphism. Since there is a natural action of $ \mathfrak{S}_5$
on $ \hol_Y(K_Y)$, we get a natural action of $ \mathfrak{S}_5$
on $H^0 ( Y, \hol_Y(-2 K_Y))$
hence  the proper transform of the equation $ \Sigma$ is a section of $H^0 ( Y, \hol_Y(-2 K_Y))$
which is an eigenvector for a character of $ \mathfrak{S}_5$.
There are only two characters, the sign and the trivial one: restricting to the subgroup $ \mathfrak{S}_3$
we see that it must be the sign.

\end{remark}

We consider now the anticanonical vector space

$$ V:= H^0(Y, \hol_Y(-K_Y)).$$

We know   that $V$ is, via adjunction,  naturally isomorphic to the vector space of cubics passing through the 4 points, 
$$ V':
= \{F \in \CC[x_1,x_2,x_3]_3 \ | \ F(p_1) =F(p_2) =F(p_3) =F(p_4) =0\} \cong \CC^6.$$

Since  $\mathfrak{S}_5$ acts linearly on $V$ we want to determine this action and translate this action on $V'$.

For this purpose we set,  for $1\leq i\neq j \leq 3$,
$$
s_{ij}:=x_i x_j(x_j-x_k), \ \{1,2,3\} = \{ i,j,k\}.
$$
\begin{prop}
$$
\{s_{ij} \ | \ 1\leq i\neq j \leq 3 \} \subset V'
$$
is  a basis of $V'$.
\end{prop}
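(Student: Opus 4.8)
The plan is to prove both assertions of the proposition at once: show that each $s_{ij}$ lies in $V'$ and that the six cubics $s_{ij}$ (indexed by the six ordered pairs $(i,j)$ with $1\le i\ne j\le 3$) are linearly independent; since $\dim_\CC V' = 6$, linear independence of a family of six vectors forces it to be a basis.

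First I would verify membership. Each $s_{ij} = x_i x_j(x_j-x_k)$ with $\{i,j,k\}=\{1,2,3\}$ is a homogeneous cubic, so it is enough to check $s_{ij}(p_m)=0$ for $m=1,2,3,4$. For $m\in\{1,2,3\}$ the point $p_m$ has only its $m$-th homogeneous coordinate nonzero; since $i\ne j$, at least one of the two linear factors $x_i,x_j$ vanishes at $p_m$ (if $m\notin\{i,j\}$ then $x_i(p_m)=0$; if $m=i$ then $x_j(p_m)=0$; if $m=j$ then $x_i(p_m)=0$), hence $s_{ij}(p_m)=0$. Moreover $s_{ij}(p_4)=1\cdot 1\cdot(1-1)=0$. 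Therefore $s_{ij}\in V'$ for all $i\ne j$, giving the inclusion $\{s_{ij}\}\subset V'$.

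Next I would establish linear independence using the explicit monomial expansion. Expanding, $s_{ij}=x_i x_j^2 - x_1x_2x_3$, uniformly in $(i,j)$. The six "leading" monomials $x_ix_j^2$, namely $x_1^2x_2, x_1x_2^2, x_1^2x_3, x_1x_3^2, x_2^2x_3, x_2x_3^2$, are pairwise distinct, and none of them equals $x_1x_2x_3$. Consequently, in the coefficient matrix of the family $(s_{ij})$ with respect to the monomial basis of $\CC[x_1,x_2,x_3]_3$, the $6\times 6$ minor formed by the six columns indexed by the monomials $x_ix_j^2$ is the identity matrix; in particular it is nonzero, so the $s_{ij}$ are linearly independent. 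Finally, $\dim_\CC V' = 6$: either one invokes the identification $V'\cong H^0(Y,\hol_Y(-K_Y))\cong\CC^6$ already recorded above, or one notes directly that the projective basis $p_1,\dots,p_4$ imposes independent conditions on cubics (for each $i$ there is a cubic — a suitable product of three of the lines $L_{h,k}$ — vanishing at the other three points but not at $p_i$), so that $\dim_\CC V' = \binom{5}{2}-4 = 6$. Six linearly independent vectors in a $6$-dimensional space form a basis, which finishes the proof.

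I do not expect a genuine obstacle here: the argument is a routine verification. The only point requiring a little care is the bookkeeping in the monomial expansion, and this is made entirely transparent by the uniform shape $s_{ij}=x_ix_j^2-x_1x_2x_3$, which immediately exhibits the identity minor.
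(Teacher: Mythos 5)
Your proof is correct, and it executes the linear-independence step differently from the paper. The paper argues by a successive reduction: it isolates $s_{12},s_{21}$ as the only two cubics not divisible by $x_3$, checks they are independent modulo $(x_3)$ via their divisors on the line $x_3=0$, then divides the remaining four by $x_3$ and repeats the game modulo $(x_1)$, finally reducing to the independence of $x_3-x_2$ and $x_1-x_2$. You instead use the uniform expansion $s_{ij}=x_ix_j^2-x_1x_2x_3$, which exhibits an identity $6\times 6$ minor in the monomial coefficient matrix and makes independence immediate; this is arguably cleaner and less error-prone than the paper's bookkeeping with quotients and divisors. You are also more complete on two points the paper glosses over: you actually verify that each $s_{ij}$ vanishes at $p_1,\dots,p_4$ (the paper just asserts it), and you justify $\dim_\CC V'=6$ by showing the four points impose independent conditions on cubics, rather than only citing the earlier identification $V'\cong\CC^6$. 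Both routes are routine and complete; yours trades the paper's geometric flavor (divisors on lines) for a transparent linear-algebra computation.
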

\begin{proof}
All 6 cubic forms vanish on $p_1, \dots, p_4$. 

$s_{12}, s_{21}$ are the only ones not divisible by $x_3$, and they are independent modulo $(x_3)$,
since their divisors on the line $x_3 =0 $ are, respectively $ 2 p_1 + p_2$,  $  p_1 + 2 p_2$.

It suffices to show that the other 4 are independent after division by  $ x_3$. Now, $\frac{s_{23}} { x_3}, \frac{s_{32}}{ x_3}$
yield  $x_2 x_3$, respectively $x_2^2$ modulo $(x_1)$, hence it suffices to show that $\frac{s_{13} }{ x_3 x_1}, \frac{s_{31}}{ x_3 x_1}$
are independent: but these are respectively $(x_3 - x_2), (x_1 - x_2)$.

\end{proof}

We set 
$$
\sigma_{ij} := \frac{\Sigma}{s_{ij}} = \frac{\Sigma}{x_ix_j(x_j-x_k)} = \epsilon x_k(x_i-x_j)(x_k-x_i), 
$$
where $ \epsilon := \e (\begin{pmatrix} 1 & 2 & 3 \\i&j&k \end{pmatrix}$) is the sign of the permutation.

Then we have:
$$
\sigma_{ij} = \epsilon (s_{ik}-s_{ki}-s_{jk}).
$$
 
 \begin{remark}
 (1) We observe as a parenthetical remark  that the $\sigma_{ij}$'s  span a $4$-dimensional sub vector space of $V$. In fact, 
 $$
 \alpha (\sigma_{12} + \sigma_{13}) + \beta (\sigma_{21} + \sigma_{23}) +  \gamma (\sigma_{31} + \sigma_{32}) = 0,
 $$
 for all $\alpha, \beta, \gamma \in \CC$ such that $\alpha + \beta + \gamma = 0$.
 
 (2) Observe that for $\mathfrak{S}_3 \leq \mathfrak{S}_5$, $\mathfrak{S}_3$ acts on homogeneous polynomials by permuting $x_1,x_2,x_3$,
 and that $V'$ is the regular representation of $\mathfrak{S}_3$.
 
 \end{remark}
Consider now the Euler sequence on $\PP^2 := \PP(U')$, $U' \cong \CC^3$:
$$
0 \ra \hol_{\PP^2} \ra U' \otimes \hol_{\PP^2}(1) \ra \Theta_{\PP^2} \ra 0.
$$
From this follows  
$$
\hol_{\PP^2}(-K_{\PP^2}) \cong \wedge^3 U' \otimes \hol_{\PP^2}(3).
$$
Therefore the isomorphism $\CC[x_1,x_2,x_3]_3  \cong H^0(\PP^2, \hol_{\PP^2}(-K_{\PP^2}))$ is given by the map:
$$
P \mapsto P(dx_1\wedge dx_2 \wedge dx_3)^{-1}.
$$
 Hence we identify  $V'$ with $V$ via the map:
$V' \ra V, \ s_{ij} \mapsto s_{ij} (dx_1 \wedge dx_2 \wedge dx_3)^{-1}$. In this way we can read out on $V'$ the given action of $\mathfrak{S}_5$ on $V$.
We obtain immediately:
\begin{lemma}\label{s3} \
$\mathfrak{S}_3 \leq \mathfrak{S}_5$ acts on $V$ by
$$
\tau(s_{ij}) = \e(\tau) s_{\tau(i)\tau(j)}.
$$
Hence the representation $V$ restricted to $\mathfrak S_3 \leq \mathfrak S_5$ is the regular representation tensored by the sign character $\e$, while  $V$ restricted to  $\mathfrak{S}_4 \leq \mathfrak{S}_5$ is isomorphic to the subspace of $S^3 ((U')^{\vee}) \otimes \e$
corresponding to the polynomials vanishing in $p_1, \dots, p_4$.
\end{lemma}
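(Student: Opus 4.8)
The plan is to compute the action of a generating set of $\mathfrak S_5$ on the basis $\{s_{ij}\}$ directly from the birational description of $\Aut(Y)$ on $\PP^2$ recorded in the Remark, and then to recognize the resulting representation. Since $\mathfrak S_5$ is generated by $\mathfrak S_4$ together with the Cremona transposition $(4,5)$, and $\mathfrak S_4$ is in turn generated by the subgroup $\mathfrak S_3$ of coordinate permutations of $(x_1,x_2,x_3)$ together with the single element $(3,4)$, it suffices to handle three cases: an arbitrary $\tau \in \mathfrak S_3$, the transposition $(3,4)$, and the transposition $(4,5)$. Throughout, one uses the identification $V' \to V$, $s_{ij} \mapsto s_{ij}\,(dx_1\wedge dx_2\wedge dx_3)^{-1}$, under which the transformation rule acquires the extra Jacobian factor — this is precisely what produces the sign twist.

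For $\tau \in \mathfrak S_3$: here $\tau$ acts on $\CC[x_1,x_2,x_3]_3$ by permuting the variables, so $s_{ij} = x_ix_j(x_j-x_k) \mapsto x_{\tau(i)}x_{\tau(j)}(x_{\tau(j)} - x_{\tau(k)}) = s_{\tau(i)\tau(j)}$ as polynomials. But $\tau$ also acts on the volume form $dx_1\wedge dx_2\wedge dx_3$ by the sign $\e(\tau)$, hence $\tau^{-1}$ contributes $\e(\tau)$ to the inverse volume form $(dx_1\wedge dx_2\wedge dx_3)^{-1}$; tracking the dualization carefully (the action on $H^0(\hol_{\PP^2}(-K_{\PP^2}))$ via the Euler-sequence identification $\hol(-K)\cong \wedge^3 U' \otimes \hol(3)$) yields exactly $\tau(s_{ij}) = \e(\tau)\, s_{\tau(i)\tau(j)}$. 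This is the content of Lemma \ref{s3}. One then must also verify the claim for $(3,4)$, which is given in the Remark by $(x_1:x_2:x_3)\mapsto(x_1-x_3:x_2-x_3:-x_3)$: one substitutes into each $s_{ij}$, expands, re-expresses the result in the basis $\{s_{ij}\}$ (a bounded linear-algebra computation in a $6$-dimensional space), and includes the Jacobian factor — the Jacobian matrix of this affine substitution is upper triangular with diagonal $(1,1,-1)$, so the determinant is $-1$, matching $\e((3,4)) = -1$. This confirms that $V$ restricted to $\mathfrak S_4$ is $S^3((U')^\vee)\otimes\e$ intersected with the vanishing conditions at $p_1,\dots,p_4$, since by construction every $s_{ij}$ lies in that subspace and they span $V'$.

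Once the $\mathfrak S_3$-action is pinned down, the structural statement follows by pure representation theory: a basis indexed by the $6$ elements of $\mathfrak S_3$ on which $\mathfrak S_3$ acts (up to a sign character) by left or right translation is, by definition, the regular representation possibly twisted by $\e$. Concretely, the map $s_{ij}$, with $(i,j)$ an ordered pair of distinct elements of $\{1,2,3\}$, is in bijection with $\mathfrak S_3$ (an ordered pair $(i,j)$ determines the permutation sending $(1,2)$ to $(i,j)$), and the formula $\tau(s_{ij}) = \e(\tau)s_{\tau(i)\tau(j)}$ says precisely that $V|_{\mathfrak S_3} \cong \CC[\mathfrak S_3]\otimes\e$; since $\CC[\mathfrak S_3]\otimes\e \cong \CC[\mathfrak S_3]$ as $\mathfrak S_3$-modules (the regular representation is self-dual and absorbs any character twist), this is consistent with the dimension count $\dim V = 6$.

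The main obstacle is not conceptual but bookkeeping: one must be scrupulous about where the sign twist enters. The subtlety is that the action of $\mathfrak S_5$ on $V = H^0(\hol_Y(-K_Y))$ is the geometric one (pullback of anticanonical sections under automorphisms), and matching this to the naive action on cubic polynomials requires correctly threading the Euler-sequence isomorphism $\hol_{\PP^2}(-K_{\PP^2}) \cong \wedge^3 U'\otimes\hol_{\PP^2}(3)$ and the resulting rule $P \mapsto P\,(dx_1\wedge dx_2\wedge dx_3)^{-1}$ through each generator. For coordinate permutations the Jacobian is manifestly the sign; for $(3,4)$ and for the Cremona map $(4,5)$ one computes the Jacobian determinant of the birational map and checks it contributes the expected factor (for the standard Cremona transformation this is a standard computation and the contribution is again $\e = -1$ for the transposition $(4,5)$, once one accounts for the $\hol(3)$ twist on the blown-up surface). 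Assembling these confirms that the geometric action, read on $V'$ in the basis $\{s_{ij}\}$, is exactly as stated.
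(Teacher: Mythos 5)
Your proposal is correct and follows essentially the same route as the paper: the action of $\mathfrak S_3$ and of $(3,4)$ is computed on the polynomials $s_{ij}$, and the sign twist is traced to the Jacobian factor coming from the identification $P \mapsto P\,(dx_1\wedge dx_2\wedge dx_3)^{-1}$, exactly as in the paper's (much terser) argument via $d(x_1-x_3)\wedge d(x_2-x_3)\wedge d(-x_3) = -\,dx_1\wedge dx_2\wedge dx_3$. Your discussion of the Cremona generator $(4,5)$ is not needed for this lemma (which only concerns the restrictions to $\mathfrak S_3$ and $\mathfrak S_4$), but it does no harm.
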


\begin{proof}
Direct computation for $\mathfrak{S}_3$, for $\mathfrak{S}_4$ observe that the tranposition $(3,4)$ acts sending 
\begin{equation}\label{S4} 
dx_1 \wedge dx_2  \wedge dx_3 \mapsto d(x_1-x_3) \wedge d(x_2-x_3) \wedge d(-x_3)= - dx_1 \wedge dx_2  \wedge dx_3.
\end{equation}
\end{proof}
\begin{remark}\label{sigma}
Since $s_{ij}\sigma_{ij} = \Sigma$ and  $\Sigma$ is an eigenvector for the sign representation, the previous lemma implies immediately that for $\tau \in \mathfrak{S}_3 \leq \mathfrak{S}_5$ we have:
$$
\tau(\sigma_{ij}) = \sigma_{\tau(i)\tau(j)}.
$$
\end{remark}

We prove the following

\begin{theorem}\label{v} \

1) The character vector $\chi_V$  of the $\mathfrak{S}_5$-representation $V=H^0(Y, \hol_Y(-K_Y))$ is equal to $(6,0,-2,0,0,1,0)$. Therefore $H^0(Y, \hol_Y(-K_Y))$ is the unique six dimensional irreducible representation of $\mathfrak{S}_5$.

2) The character of $\bigwedge^6 V$ is the sign character $\e = \chi_2$.
\end{theorem}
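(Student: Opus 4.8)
The plan is to compute the character $\chi_V$ directly by evaluating traces on representatives of the five conjugacy classes of $\mathfrak{S}_5$, namely the identity, a transposition such as $(1,2)$, a double transposition such as $(1,2)(3,4)$, a $3$-cycle such as $(1,2,3)$, a product of a $3$-cycle and a transposition such as $(1,2,3)(4,5)$, and a $5$-cycle such as $(1,2,3,4,5)$ — so the character vector has six entries listed in the order $(1^5, 2\cdot 1^3, 2^2\cdot 1, 3\cdot 1^2, 3\cdot 2, 5)$, and we aim for $(6,0,-2,0,0,1)$. Wait — the statement lists seven entries $(6,0,-2,0,0,1,0)$; I would reconcile this by noting the authors evidently also record the value on one further (redundant) class-representative convention or use a seven-slot labelling consistent with the rest of the paper; in any case the content is the tuple of values on the conjugacy classes, and I will compute each one.

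First I would handle the easy entries. The dimension gives $\chi_V(1) = 6$. For elements lying in $\mathfrak{S}_3 \le \mathfrak{S}_5$ — the transposition $(1,2)$ and the $3$-cycle $(1,2,3)$ — I invoke Lemma \ref{s3}: on the basis $\{s_{ij}\}$ of $V'$ the element $\tau$ acts by $s_{ij} \mapsto \e(\tau)\, s_{\tau(i)\tau(j)}$, a signed permutation matrix, so its trace is $\e(\tau)$ times the number of fixed pairs $(i,j)$ with $i\ne j$ in $\{1,2,3\}$. The transposition $(1,2)$ fixes no such ordered pair (it fixes only $3$, and there is no pair with both entries fixed), giving trace $0$; the $3$-cycle $(1,2,3)$ fixes no ordered pair either, giving trace $0$. (These are the entries in slots "$2\cdot1^3$" and "$3\cdot1^2$", both $0$.) For the $3$-cycle times a transposition $(1,2,3)(4,5)$ and the double transposition $(1,2)(3,4)$, the elements do not lie in $\mathfrak{S}_3$, so I need the action of a Cremona-type generator. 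Using the explicit generators recorded in the excerpt — the transposition $(4,5)$ is the standard quadratic Cremona transformation $(x_1:x_2:x_3)\mapsto(x_2x_3:x_1x_3:x_1x_2)$, and $(3,4)$ acts by $(x_1:x_2:x_3)\mapsto(x_1-x_3:x_2-x_3:-x_3)$ — one writes the $6\times 6$ matrix of the chosen representative on $V'$ in the basis $\{s_{ij}\}$, remembering the twist by $\e$ coming from the pullback of $dx_1\wedge dx_2\wedge dx_3$ as in equation \eqref{S4}, and reads off the trace. I expect to get $0$ for $(1,2)(3,4)$ and $0$ for $(1,2,3)(4,5)$, and the remaining value $-2$ for the double transposition... let me re-examine: the target is $(6,0,-2,0,0,1)$, so the $-2$ sits on the double-transposition class. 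I would recompute that one carefully: choose the representative $(1,2)(3,4)$, decompose it as $(1,2)$ (inside $\mathfrak{S}_3$) composed with the involution $(3,4)$ whose action on $V'$ is given above, multiply the two $6\times 6$ matrices, and take the trace.

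The genuinely computational step — and the main obstacle — is the $5$-cycle $(1,2,3,4,5)$, for which the claimed trace is $1$. Here the excerpt already hands us the answer: the displayed action of $\varphi = (1,2,3,4,5)$ on the basis, $s_{12}\mapsto s_{31}$, $s_{13}\mapsto s_{13}-s_{31}-s_{23}$, $s_{21}\mapsto s_{21}$, $s_{23}\mapsto s_{12}-s_{21}-s_{32}$, $s_{31}\mapsto s_{13}-s_{31}+s_{21}$, $s_{32}\mapsto s_{12}-s_{21}+s_{31}$. Reading off the diagonal coefficients: $s_{12}$-coefficient in $\varphi(s_{12})$ is $0$; $s_{13}$-coefficient in $\varphi(s_{13})$ is $1$; $s_{21}$-coefficient in $\varphi(s_{21})$ is $1$; $s_{23}$-coefficient in $\varphi(s_{23})$ is $0$; $s_{31}$-coefficient in $\varphi(s_{31})$ is $-1$; $s_{32}$-coefficient in $\varphi(s_{32})$ is $0$. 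The sum is $0+1+1+0-1+0 = 1$, as claimed. To make the proof self-contained I would verify these formulas for $\varphi$ either by composing the generator actions ($\varphi = (1,2)(2,3)(3,4)(4,5)$ written in the appropriate order, each factor already given) or by a direct pullback computation on cubics, using $\sigma_{ij} = \e(s_{ik}-s_{ki}-s_{jk})$ to rewrite the images in the $s$-basis; the bookkeeping with signs and with the $\sigma_{ij}$-to-$s_{ij}$ conversion is where errors creep in, so that is the step demanding care. Having assembled $\chi_V = (6,0,-2,0,0,1)$, part 1 follows: its norm $\langle \chi_V,\chi_V\rangle = \tfrac{1}{120}(1\cdot 36 + 10\cdot 0 + 15\cdot 4 + 20\cdot 0 + 20\cdot 0 + 24\cdot 1) = \tfrac{1}{120}(36+60+24) = 1$, so $V$ is irreducible, and being $6$-dimensional it is the unique $6$-dimensional irreducible of $\mathfrak{S}_5$ (the character table of $\mathfrak{S}_5$ has a single irreducible of degree $6$).

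For part 2, the character of $\bigwedge^6 V = \det V$ is the one-dimensional character $g\mapsto \det(g|_V)$. Since $\mathfrak{S}_5$ has abelianization $\ZZ/2$, this is either trivial or the sign character $\e$, and it suffices to evaluate it on a transposition. Take $\tau=(1,2)$: by Lemma \ref{s3} it acts on $\{s_{ij}\}$ by $s_{ij}\mapsto \e(\tau)s_{\tau(i)\tau(j)}= -s_{\tau(i)\tau(j)}$, i.e. as $(-1)$ times the permutation matrix of $\tau$ acting on the six ordered pairs. That permutation of the pairs $\{(1,2),(2,1),(1,3),(3,1),(2,3),(3,2)\}$ is $(1,2)(1,3)(2,3)\mapsto(2,1)(2,3)(1,3)$, a product of disjoint transpositions: it swaps $(1,2)\leftrightarrow(2,1)$, swaps $(1,3)\leftrightarrow(2,3)$, and swaps $(3,1)\leftrightarrow(3,2)$ — three transpositions — so its determinant is $(-1)^3=-1$; combined with the scalar $(-1)^6=1$ from the $\e(\tau)$ factors we get $\det(\tau|_V) = -1 = \e(\tau)$. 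Hence $\det V = \e$, which the paper also names $\chi_2$. I would double-check the pair-permutation's cycle type against the rule $s_{ij}\mapsto s_{\tau(i)\tau(j)}$ to be sure the three swaps are the right ones, but no further input is needed.
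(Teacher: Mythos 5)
Your overall strategy is exactly the paper's: compute $\chi_V$ entry by entry on class representatives using the signed-permutation action of $\mathfrak S_3$ together with the explicit actions of $(3,4)$ and $(4,5)$, conclude irreducibility from $\langle\chi_V,\chi_V\rangle=1$, and settle $\bigwedge^6 V$ by evaluating on the single transposition $(1,2)$ (your determinant-of-a-signed-permutation argument is a correct rephrasing of the paper's direct computation $\omega\mapsto-\omega$). However, there is one genuine gap: you have miscounted the conjugacy classes of $\mathfrak S_5$. There are seven, one for each partition of $5$, and the one you have omitted is the class of $4$-cycles ($4+1$); the seventh entry of the paper's character vector $(6,0,-2,0,0,1,0)$ is not a ``redundant convention'' but the value $\chi_V((1,2,3,4))=0$, which the paper computes explicitly ($s_{12}\mapsto\sigma_{12}$, $s_{13}\mapsto s_{31}$, $s_{21}\mapsto-\sigma_{13}$, $s_{23}\mapsto s_{21}$, $s_{31}\mapsto-\sigma_{23}$, $s_{32}\mapsto\sigma_{32}$, whose diagonal contributions are $1+0-1+0-1+1=0$). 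As written, your norm computation
$$\tfrac{1}{120}\bigl(1\cdot 36+10\cdot 0+15\cdot 4+20\cdot 0+20\cdot 0+24\cdot 1\bigr)$$
uses class sizes summing to $90$, not $120$; it happens to equal $1$ only because the missing term $30\cdot\chi_V((1,2,3,4))^2$ is zero, a fact you never establish. You must add the $4$-cycle computation (e.g.\ via the $(3,4)$-action composed with a $3$-cycle, or directly as the paper does) before the irreducibility conclusion is justified. The same omission affects your shortcut of ``identifying the unique $6$-dimensional irreducible'': that identification reads off the full seven-column character table, so the seventh value must actually be in hand. Aside from this, the remaining deferred computations (the traces $-2$ on $(1,2)(3,4)$ and $0$ on $(1,2,3)(4,5)$) are carried out in the paper exactly by the method you describe, so once the $4$-cycle class is restored your argument matches the paper's proof.
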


For the convenience of the reader and for the purpose of fixing the notation we recall below the character table of $\mathfrak{S}_5$.
Here, cf. \cite{james-liebeck},  pages 199-202, the natural permutation representation on $\ZZ/5$ yields $\chi_1 + \chi_4$, while $\Lambda^2 (\chi_4) = \chi_7$,
and $S^2 (\chi_4) = \chi_1 + \chi_4 + \chi_5$. Finally, $\chi_3 = \chi_4 \otimes \chi_2$, $\chi_6 = \chi_5 \otimes \chi_2$, and obviously $\chi_7 = \chi_7 \otimes \chi_2$.

\begin{table}[h]
 \caption{Character table of $\mathfrak{S}_5$}
 \begin{tabular}{cccccccc}
Conj. Class & $1$ & $(1,2)$ & $(1,2)(3,4)$&$(1,2,3)$ &$(1,2,3,4)$&$(1,2,3,4,5)$&$(1,2,3)(4,5)$ \\
\hline
\hline
$\chi_1 = \chi_{W_1}$& 1&1&1&1&1&1&1\\
\hline
$\chi_2 = \chi_{W_2} = : \e$& 1&-1&1&1&-1&1&-1\\
\hline
$\chi_3= \chi_{W_3}$& 4&-2&0&1&0&-1&1\\
\hline
$\chi_4= \chi_{W_4}$& 4&2&0&1&0&-1&-1\\
\hline
$\chi_5= \chi_{W_5}$& 5&1&1&-1&-1&0&1\\
\hline
$\chi_6= \chi_{W_6}$& 5&-1&1&-1&1&0&-1\\
\hline
$\chi_7= \chi_{W_7}$& 6&0&-2&0&0&1&0\\
\hline
\hline
 \end{tabular}
 \label{tab:meinetabelle}
 \end{table}
 \begin{remark}
 Indeed, as we shall also see later, $V = W_7$, $W = W_5$ and $U_4 = W_4$.
 \end{remark}
\begin{proof} \

1) We have already observed  that for $\tau \in \mathfrak{S}_3$ we have $\tau(s_{ij}) = \e(\tau)s_{\tau(i)\tau(j)}$ .

Recall that the transposition $\tau:=(3,4)$ acts as 
$$
\tau \begin{pmatrix} x_1\\x_2\\x_3 \end{pmatrix} = \begin{pmatrix} x_1-x_3\\x_2-x_3\\-x_3 \end{pmatrix}. 
$$
We use formula \ref{S4}, obtaining that $(3,4)$ acts by:
\begin{itemize}
\item $s_{12} = x_1x_2(x_2-x_3) \mapsto - (x_1-x_3)(x_2-x_3)x_2 = \sigma_{31}$;
\item $s_{13} =x_1x_3(x_3-x_2) \mapsto - (x_1-x_3)(-x_3) (-x_2) = s_{23}$;
\item $s_{21} = x_2x_1(x_1-x_3) \mapsto - (x_2-x_3)(x_1-x_3)x_1 = -\sigma_{32}$;
\item $s_{23} \mapsto s_{13}$;
\item $s_{31} = x_1x_3(x_1-x_2) \mapsto - (x_1-x_3)(-x_3)(x_1-x_2) = -\sigma_{12}$;
\item $s_{32} = x_2x_3(x_2-x_1) \mapsto - (x_2-x_3)(-x_3)(x_2-x_1) = \sigma_{21}$.
\end{itemize}
Next we consider the action of $\tau = (4,5)$. This time we do not have an action on the space of polynomials, hence
we work  on the affine chart $\{x_3=1\}$.

 Then: $\tau\begin{pmatrix} x_1\\x_2 \end{pmatrix} = \begin{pmatrix} \frac{1}{x_1}\\ \frac{1}{x_2} \end{pmatrix}$ and we have that $(4,5) $ acts by:
$$
\tau((dx_1\wedge dx_2)^{-1}) = (d \frac{1}{x_1}\wedge d \frac{1}{x_2})^{-1} = (\frac{1}{x_1^2}dx_1 \wedge \frac{1}{x_2^2}dx_2)^{-1} = x_1^2x_2^2 (dx_1\wedge dx_2)^{-1}.
$$
Therefore (again here $\{i,j,k\} = \{1,2,3\}$)
\begin{multline}
s_{ij}(dx_1\wedge dx_2)^{-1} = x_ix_j(x_j-x_k) (dx_1\wedge dx_2)^{-1} \mapsto \frac{1}{x_i}  \frac{1}{x_j}  \frac{x_k - x_j}{x_jx_k}x_1^2x_2^2 (dx_1\wedge dx_2)^{-1} = \\
= \frac{x_1^2x_2^2x_3^2}{x_ix_j^2x_k}(x_k-x_j) (dx_1\wedge dx_2)^{-1}  = x_ix_k(x_k-x_j) (dx_1\wedge dx_2)^{-1} = s_{ik} (dx_1\wedge dx_2)^{-1} .
\end{multline}
Now we are ready to calculate the trace of $\tau$ for 
$$\tau \in \{(1,2),(1,2)(3,4),(1,2,3),(1,2,3,4), \\ (1,2,3,4,5),(1,2,3)(4,5) \}.
$$

\underline{$\tau = (1,2)$:}
$$
s_{12}   \leftrightarrow   -s_{21}, \ s_{13}  \leftrightarrow  -s_{23},  \ s_{31}  \leftrightarrow  -s_{32},
$$
hence $\chi_V(\tau) = \trace(\tau) = 0$.

\underline{$\tau = (1,2)(3,4)$:}
$$
s_{12}   \mapsto   \sigma_{32} = s_{21}+s_{13}-s_{31}, \ s_{13}  \mapsto -s_{13},  \ s_{21}  \mapsto  -\sigma_{31} = s_{12} +s_{23}-s_{32},
$$
$$
s_{23}   \mapsto  -s_{23}, \  s_{31} \mapsto  -\sigma_{21} = s_{23} -s_{13}-s_{32},  \ s_{32}  \mapsto  \sigma_{12} = s_{13} -s_{31}-s_{23},
$$
hence $\chi_V(\tau) = \trace(\tau) = -2$.

 If we use the character table of $\mathfrak S_5$ we can now immediately conclude, since $V$ has dimension $6$ and  the trace of 
$(1,2)(3,4)$ is nonnegative for all other representations, except that $\chi_7( (1,2)(3,4)) = -2$.

For completeness we calculate explicitly the action of the other representatives of all the conjugacy classes (thereby giving a selfcontained proof that $V$ is irreducible).

\underline{$\tau = (1,2,3)$:}
$$
s_{12}   \mapsto   s_{23}, \ s_{13}  \mapsto s_{21},  \ s_{21}  \mapsto  s_{32},
$$
$$
s_{23}   \mapsto  s_{31}, \  s_{31} \mapsto  s_{12} ,  \ s_{32}  \mapsto  s_{13},
$$
hence $\chi_V(\tau) = \trace(\tau) = 0$.

\underline{$\tau = (1,2,3,4)$:}
$$
s_{12}   \mapsto  \sigma_{12} =s_{13}-s_{31}-s_{23} , \ s_{13}  \mapsto s_{31},  \ s_{21}  \mapsto   -\sigma_{13} =s_{12} -s_{21}-s_{32},
$$
$$
s_{23}   \mapsto  s_{21}, \  s_{31} \mapsto  -\sigma_{23}= s_{12} -s_{21}+s_{31},  \ s_{32}  \mapsto  \sigma_{32}=s_{13} -s_{31}+s_{21},
$$
hence $\chi_V(\tau) = \trace(\tau) = 0$.

\underline{$\tau = (1,2,3,4,5)$:}
$$
s_{12}   \mapsto   s_{31}, \ s_{13}  \mapsto \sigma_{12}=s_{13}- s_{31}-s_{23},  \ s_{21}  \mapsto  s_{21},
$$
$$
s_{23}   \mapsto  -\sigma_{13}=s_{12}-s_{21}-s_{32}, \  s_{31} \mapsto  -\sigma_{32}=s_{13} -s_{31}+s_{21},  \ s_{32}  \mapsto  -\sigma_{23}=s_{12} -s_{21}+s_{31},
$$
hence $\chi_V(\tau) = \trace(\tau) = 1$.

\underline{$\tau = (1,2,3)(4,5)$:}

$$
s_{12}   \mapsto   s_{21}, \ s_{13}  \mapsto s_{23},  \ s_{21}  \mapsto  s_{31},
$$
$$
s_{23}   \mapsto  s_{32}, \  s_{31} \mapsto  s_{13} ,  \ s_{32}  \mapsto  s_{12},
$$
hence $\chi_V(\tau) = \trace(\tau) = 0$.

 The above explicit calculations  show that the character vector for the representation $V$ is 
$$
\chi_V = (6,0,-2,0,0,1,0).
$$
hence $\langle \chi_V , \chi_V\rangle = 1$ and $V$ is an irreducible representation.

2) A basis of $\bigwedge^6V$ consists of the element $\omega:=s_{12} \wedge s_{13} \wedge s_{21} \wedge s_{23} \wedge s_{31} \wedge s_{32}$. Then $\tau = (1,2)$ maps $\omega$ to
$$
s_{21} \wedge s_{23} \wedge s_{12} \wedge s_{13} \wedge s_{32} \wedge s_{31} = -\omega,
$$
whence the claim follows.
\end{proof}

\section{Quadratic equations for the Del Pezzo surface of degree 5 and geometrical descriptions of $\mathfrak S_5$-representations } 
We observe that by Riemann-Roch and Kodaira vanishing we have:
$$
 h^0(\hol_Y(-2K_Y)) = \chi(\hol_Y(-2K_Y)) = \frac12(-2K_Y)(-3K_Y) +1 = 16.
$$
 The above formula can also be checked by direct calculation of  the space of homogeneous polynomials $F(x_1, x_2, x_3)$ of 
degree 6 vanishing of multiplicity 2 at the points  $p_1, \dots, p_4$. This direct calculation also shows that 
 the natural   morphism of $\mathfrak{S}_5$-representations:
$$
\Phi \colon \Sym^2(V) \cong \CC^{21} \ra H^0(Y, \hol_Y(2)) (=  H^0(Y, \hol_Y(-2K_Y))) \cong \CC^{16}.
$$
is surjective.
 
We set $W:= \ker(\Phi) (\cong \CC^5) = H^0(Y, \mathcal{I}_Y(2))$.

Observe that, setting $Q_{ij}:= s_{ij} \sigma_{ij}$, $1 \leq i\neq j \leq 3$, we have that $\Phi(Q_{ij}) = \Sigma \in H^0(Y, \hol_Y(2))$. It is easy to verify that the set
$\{Q_{ij} \ | \ 1 \leq i\neq j \leq 3 \}$ is $\CC$-linearly independent and therefore 
$$
q_{12}:=Q_{12} -Q_{32}, \ q_{13}:=Q_{13} -Q_{32}, \ q_{21}:=Q_{21} -Q_{32}, 
$$
$$
q_{23}:=Q_{23} -Q_{32}, \ q_{31}:=Q_{31} -Q_{32} 
$$
are linearly independent elements of $W$, hence form a basis.

If we denote by $W'$ the  vector subspace of $\Sym^2(V)$ spanned by the $Q_{ij} ,  \ 1 \leq i\neq j \leq 3$, then we have the following inclusions,
where $\sI$ is the sheaf of ideals of the functions vanishing on $Y$:
$$
H^0(Y, \sI_Y(2)) =W \subset W' = \langle Q_{ij} \ | \  \ 1 \leq i\neq j \leq 3 \rangle \subset \Sym^2(V).
$$

\subsection{Geometrical and combinatorial pentagons}

We want to give now some geometrical background to the choice of the sections $s_{ij}, \s_{ij}$, and the quadratic forms $Q_{ij}$,
which makes our calculations less mysterious.

\begin{definition}
Let $\SSS$ be a set. Then 

\begin{enumerate}
\item
an ordered {\it combinatorial } n-gon on $\SSS$ is a map $p: \ZZ/n \ra \SSS$; 
\item
the i-th {\it side} $L_i$ is the restriction of $p$ to $\{i, i+1\}$;
\item
an ordered combinatorial n-gon on $\SSS$ is said to be {\it nondegenerate} if $p$ is injective;
\item
an {\it oriented combinatorial n-gon} on $\SSS$ is an equivalence class of  ordered combinatorial n-gons on $\SSS$
for the action of $\ZZ/n$ given by composition on the source $ p (i) \sim p(i+a)$;
\item
a(n  unoriented) {\it  combinatorial n-gon} on $\SSS$ is an equivalence class of  oriented combinatorial n-gons on $\SSS$,
where $p(i) \sim p(-i)$ (that is, an equivalence class of ordered n-gons for the action of the dihedral group $D_n$, $ p (i) \sim p(\pm i+a)$);
\item
the {\it neighbouring  n-gon} of $p(i)$ is the   unoriented n-gon $p(2i)$;  it is nondegenerate, for $n$ odd, iff $p$ is nondegenerate, moreover 
in this case  it has no sides in common with the  n-gon  $p(i)$ 
once $ n \geq 5$;
\item
a {\it double combinatorial pentagon} ($n=5$) is the (unordered) pair of two neighbouring nondegenerate (unoriented) combinatorial pentagons.
\item
If $\SSS$ is a linear space, then a {\it geometrical} n-gon is the union of $n$ distinct  lines $L_i$ associated to 
a combinatorial pentagon $p(i)$ in such a way  that $L_i$ is the line joining $p(i)$ with $p(i+1)$.
\end{enumerate}
\end{definition}

\begin{remark}
If the set $\SSS$ has $n$ elements, and we consider only nondegenerate n-gons, then the objects in  item (1) are the elements of  the
set $\mathfrak S_n $,  those in item (4) are  the
cosets $\mathfrak S_n / (\ZZ/n)$, for  item (5) we get   the
cosets $\mathfrak S_n / D_n$, and finally for  item (7) we have  the
cosets $\mathfrak S_5 / \Aff(1, \ZZ/5)$. 
\end{remark}

\begin{proposition}
Let $Y$ be a Del Pezzo surface of degree 5.
\begin{enumerate}
\item[a)] The geometrical pentagons contained in $Y$ are
exactly 12.
They are given by  the divisors of zeros of the 12 sections $s_{ij}, \s_{ij}$. 

\item[b)] There is a bijection
between the set of such geometrical pentagons and the set of combinatorial nondegenerate pentagons on $\{1,2,3,4,5\}$.
This bijection associates to the combinatorial pentagon $[i \mapsto p(i)]$ the geometrical pentagon 
$$ E_{p(1)p(2)} +   E_{p(2)p(3)} +  E_{p(3)p(4)} +  E_{p(4)p(5)} +  E_{p(5)p(1)} .$$ 

\item[c)] There is a bijection between the set of the 6 quadrics $\{Q_{ij}\}$ and the set of double combinatorial pentagons
on $\{1,2,3,4,5\}$.

\item[d)] Moreover, the subset of $V$
$$ \sM : = \{ \pm s_{ij}, \pm \s_{ij} \}$$
is an orbit for the action of $\mathfrak S_5$ on $V$, and the stabilizer of $s_{21}$ is the cyclic subgroup 
generated by $(1,2,3,4,5)$.
In particular $\mathfrak S_4$ acts simply transitively on $\sM$. 
\end{enumerate}
\end{proposition}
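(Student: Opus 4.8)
The natural order is to prove (d) first, and then to read off (a), (b), (c) from the divisor of the single section $s_{21}$ together with the $\mathfrak S_5$-equivariance supplied by (d).

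\emph{Part (d).} The twelve elements $\pm s_{ij}$ are distinct since the $s_{ij}$ form a basis of $V$, and from $\sigma_{ij}=\epsilon(s_{ik}-s_{ki}-s_{jk})$ one checks that no $s_{ij}$ is proportional to any $\sigma_{hk}$ and no two $\sigma_{ij},\sigma_{hk}$ are proportional, so $|\sM|=24$. I would first check that $\sM$ is $\mathfrak S_5$-stable: by Lemma \ref{s3} and Remark \ref{sigma} the subgroup $\mathfrak S_3$ permutes $\{\pm s_{ij}\}$ and $\{\pm\sigma_{ij}\}$; the transpositions $(3,4)$ and $(4,5)$ send each $s_{ij}$ into $\sM$ by the explicit formulas in the proof of Theorem \ref{v}, and their action on the $\sigma_{ij}$ follows from $s_{ij}\sigma_{ij}=\Sigma$, the $\mathfrak S_5$-equivariance of $\Sym^2 V\to H^0(\hol_Y(-2K_Y))$, and the fact that $\Sigma$ is a sign eigenvector, by dividing $\pm\Sigma$ by the image of $s_{ij}$ inside the (integral) anticanonical ring of $Y$ — again landing in $\sM$. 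Thus $\sM$ is stable under $\mathfrak S_4=\langle\mathfrak S_3,(3,4)\rangle$, hence under $\mathfrak S_5$. Next I would show the $\mathfrak S_4$-orbit of $s_{21}$ is all of $\sM$: its $\mathfrak S_3$-orbit is $\{s_{21},-s_{12},-s_{23},-s_{31},s_{32},s_{13}\}$; applying $(3,4)$ to $s_{13}$ produces $s_{23}$, so the orbit contains $\pm s_{23}$ and hence all twelve $\pm s_{ij}$; applying $(3,4)$ to $s_{12}$ and to $s_{21}$ produces $\sigma_{31}$ and $-\sigma_{32}$, whose $\mathfrak S_3$-orbits are all of $\{\sigma_{ij}\}$ and all of $\{-\sigma_{ij}\}$. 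Hence the orbit has $24=|\mathfrak S_4|$ elements, so $\mathfrak S_4$ acts simply transitively on $\sM$, and a fortiori $\sM$ is one $\mathfrak S_5$-orbit. Finally $\mathrm{Stab}_{\mathfrak S_5}(s_{21})$ has order $120/24=5$ and contains $(1,2,3,4,5)$, which fixes $s_{21}$ (proof of Theorem \ref{v}); so it equals $\langle(1,2,3,4,5)\rangle$.

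\emph{Parts (a) and (b).} The cubic $x_2x_1(x_1-x_3)$ representing $s_{21}$ is the union of $\{x_1=0\}=L_{2,3}$, $\{x_2=0\}=L_{1,3}$, $\{x_1=x_3\}=L_{2,4}$, with multiplicities $(1,2,2,1)$ at $(p_1,p_2,p_3,p_4)$, hence on $Y$
$$
\divi_Y(s_{21})=E_{1,4}+E_{2,4}+E_{1,3}+E_{2,5}+E_{3,5},
$$
and a short check with the intersection rule shows these five lines form a $5$-cycle, i.e. a geometrical pentagon. By (d) every $\divi_Y(m)$, $m\in\sM$, is an $\mathfrak S_5$-translate of this divisor, hence again a pentagon, and $m\mapsto\divi_Y(m)$ has fibres exactly $\{m,-m\}$, giving $24/2=12$ distinct geometrical pentagons. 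For (b) I would define $\Psi([p]):=\sum_{i\in\ZZ/5}E_{p(i)p(i+1)}$; it is well defined on combinatorial pentagons because the set of sides $\{\{p(i),p(i+1)\}\}$ is $D_5$-invariant, it is $\mathfrak S_5$-equivariant, each value is a pentagon (the five sides are pairwise disjoint precisely in the pentagram pattern $i\sim i+2$, again a $5$-cycle), and it is injective because the set of sides determines the cyclic order up to $D_5$. Since the Petersen graph has exactly $12$ pentagons (classical; e.g. each of its $15$ edges lies in exactly $4$ of them), $\Psi$ is a bijection, which is (b); matching sides identifies $\divi_Y(s_{21})=\Psi([(1,4,2,5,3)])$, so the $12$ pentagons of (a) are precisely the $\divi_Y(m)$, $m\in\sM$, i.e. the divisors of zeros of the twelve sections $s_{ij},\sigma_{ij}$.

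\emph{Part (c).} From $s_{ij}\sigma_{ij}=\Sigma$ and the fact (Section \ref{section1}) that $\divi_Y(\Sigma)$ is the sum of the ten lines, the pentagons $\divi_Y(s_{ij})$ and $\divi_Y(\sigma_{ij})$ are complementary: disjoint line sets whose union is all ten lines. Under $\Psi^{-1}$ two complementary pentagons correspond to neighbouring combinatorial pentagons, since the diagonals $\{p(i),p(i+2)\}$ of $[p]$ are exactly the sides of the neighbour $[p\circ(i\mapsto 2i)]$. Hence $Q_{ij}=s_{ij}\sigma_{ij}\mapsto\{[\divi_Y(s_{ij})],[\divi_Y(\sigma_{ij})]\}$ is a well-defined $\mathfrak S_5$-equivariant map from the six quadrics $Q_{ij}$ to the set of double combinatorial pentagons; it is injective (the $s_{ij}$ are distinct and none is proportional to a $\sigma_{hk}$), and since the set of double pentagons also has $12/2=6$ elements it is a bijection.

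\emph{Main obstacle.} The proof is essentially bookkeeping with the $\mathfrak S_5$-action and has no deep step. The points that need care are: extracting the $\mathfrak S_5$-action on the $\sigma_{ij}$ from the one on the $s_{ij}$ through $s_{ij}\sigma_{ij}=\Sigma$ (this uses that $Y$ is irreducible, so its anticanonical ring is a domain); the converse in (a) — that there are no further geometrical pentagons — which rests on the elementary but not entirely trivial fact that the Petersen graph has exactly $12$ pentagons; and keeping the $D_5$- and $\Aff(1,\ZZ/5)$-reductions straight so that $\Psi$ and the map in (c) are genuine bijections between sets of cardinalities $12$ and $6$.
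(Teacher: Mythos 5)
Your proof is correct and follows essentially the same route as the paper: an explicit computation of one divisor $\divi_Y(s_{21})$ propagated by the $\mathfrak S_5$-action, complementarity with respect to $\divi(\Sigma)$ for part (c), and a generators-based orbit count for part (d) (you use $(3,4)$ and $(4,5)$ where the paper uses $(1,2,3)(4,5)$ and $(3,4)$, with the same $\Sigma$-division trick to control the $\sigma_{ij}$). The only substantive difference is that for the ``exactly $12$'' claim you invoke the classical count of $5$-cycles in the Petersen graph, whereas the paper proves this self-containedly via the correspondence $A_i \mapsto B_i := A_{2i}$ sending a geometric pentagon to a nondegenerate combinatorial one; your parenthetical edge-count is only a consistency check, so if you want the argument closed you should include that short construction (or an equivalent enumeration).
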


Figures 2 and 3 illustrate items (6), (7), (b)  and (c)  above.

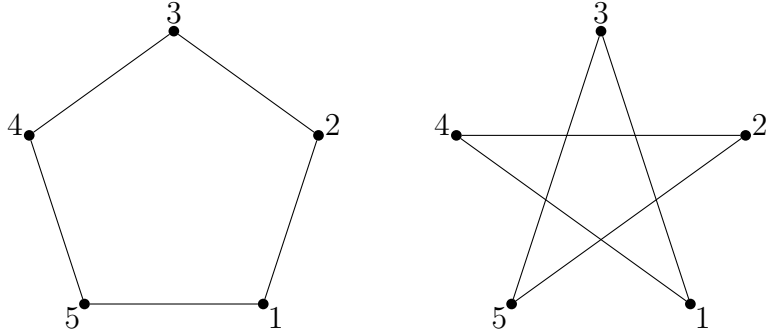
\begin{figure}
\centering
\subfloat{
\begin{tikzpicture}[bullet/.style={circle, fill,minimum size=4pt, inner sep=0pt, outer sep=0pt},nodes=bullet][baseline]
    \draw (18:2cm) node[label=18:2] (2){} 
     (90:2cm) node[label=90:3] (3){} 
     (162:2cm) node[label=162:4] (4){} 
      (234:2cm) node[label=234:5] (5){} 
     (306:2cm) node[label=306:1] (1){} ;

\draw (1) -- (2);     
\draw (2) -- (3);
\draw (3) -- (4);
\draw (4) -- (5);
\draw (5) -- (1);

\end{tikzpicture}
}
\qquad
\subfloat{
\begin{tikzpicture}[bullet/.style={circle, fill,minimum size=4pt, inner sep=0pt, outer sep=0pt},nodes=bullet][baseline]
    
    \draw (18:2cm) node[label=18:2] (2){} 
     (90:2cm) node[label=90:3] (3){} 
     (162:2cm) node[label=162:4] (4){} 
      (234:2cm) node[label=234:5] (5){} 
     (306:2cm) node[label=306:1] (1){} ;

\draw (1) -- (3);     
\draw (2) -- (4);
\draw (3) -- (5);
\draw (4) -- (1);
\draw (5) -- (2);

\end{tikzpicture}
}
\caption{A double combinatorial pentagon.}
\end{figure}

\begin{center}
\begin{figure}
\begin{tikzpicture}[every node/.style={draw,circle,very thick}]
    \draw (18:5cm) node[label=18:] (2){34} 
     (90:5cm) node[label=90:] (3){12} 
     (162:5cm) node[label=162:] (4){45} 
      (234:5cm) node[label=234:] (5){23} 
     (306:5cm) node[label=306:] (1){15} ;
    
\draw (18:2cm) node[label=18:] (7){25} 
     (90:2cm) node[label=90:] (8){35} 
     (162:2cm) node[label=162:] (9){13} 
      (234:2cm) node[label=234:] (10){14} 
     (306:2cm) node[label=306:] (6){24}  (7);

\draw (1) -- (2);     
\draw (2) -- (3);
\draw (3) -- (4);
\draw (4) -- (5);
\draw (5) -- (1);

\draw [dashed](1) -- (6);
\draw [dashed](2) -- (7);
\draw [dashed](3) -- (8);
\draw [dashed](4) -- (9);
\draw [dashed](5) -- (10);

\draw (6) -- (8);
\draw (6) -- (9);
\draw (7) -- (9);
\draw (8) -- (10);
\draw (10) -- (7);
    
\end{tikzpicture}
\caption{The pair of disjoint geometric pentagons in the Petersen graph associated to the above double combinatorial pentagon.}
\end{figure}
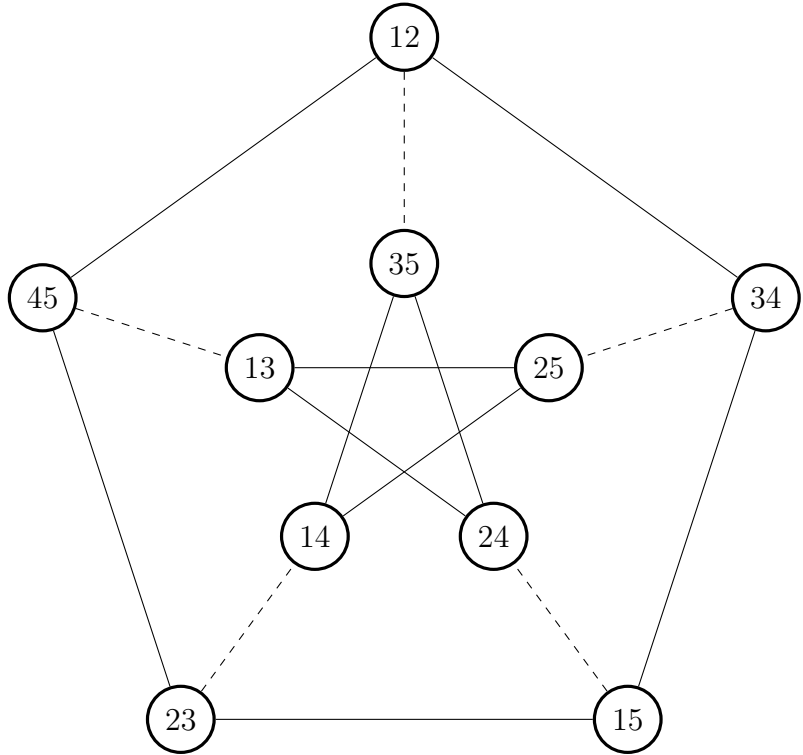
\end{center}

\begin{proof}
We know that the only lines contained in $Y$ are the 10 lines $E_{ij}$, and it is straightforward to verify that 
their union is the divisor $ \divi (\Sigma)$.

Let $L_1 + L_2 \dots + L_5$ be a geometrical pentagon; then $L_i$ intersects $L_{i+1}$, hence we get 
5 distinct pairs $A_i$ of elements in $\{1,2,3,4,5\}$ such that $A_i$ and $A_{i+1}$ are disjoint, $\forall i \in \ZZ/5$.

Hence $A_{i+1}$  is disjoint from the union of $A_i$ and $A_{i+2}$, in particular  $A_i$ and $A_{i+2}$ 
have exactly one element in common. 

We consider the  \footnote{  it is the dually neighbouring pentagon, where the dual n-gon is associated to the sequence of  sides $L_1, \dots, L_n$ } 5-gon $B_i$, where $B_i : = A_{2i}$.  We denote by 
$$p(i+1) : = B_i \cap B_{i+1} = A_{2i} \cap A_{2i+2}.$$ 
Hence $B_i = \{ p(i) ,p(i+1) \}$, and we get a nondegenerate combinatorial pentagon.

 Observe that we can recover the $A_i$'s from the $B_j$'s, simply letting $A_i : = B_{2i}$.  

By item (5) above the number of nondegenerate combinatorial pentagons on $(1,2,3,4,5)$ is $120 / 10 = 12$.

Direct inspection shows then  that $$ \divi (s_{12}) = E_{14} + E_{42} + E_{23} +E_{35} + E_{51} .$$
Similar formulae can be computed directly for $\divi (s_{ij})$, we may however observe that $\{ \divi (s_{ij}) \}$
is the $\mathfrak S_3$-orbit of  $\divi (s_{12})$, hence $\divi (s_{ij})$ is a geometric pentagon.

Since $\divi(\Sigma)$ is the union of the 10 lines of $Y$, follows that $\divi (\s_{ij})$ is associated to the neighbouring 
combinatorial pentagon of the one of $\divi (s_{ij})$.

Hence the quadrics $Q_{ij}$ correspond bijectively to double pentagons (these are 6).

The calculations performed above  in the proof of theorem \ref{v} show that if $\tau : = (1,2,3)(4,5)$, then $\langle \tau \rangle$ permutes 
 simply transitively the set $\{ s_{ij}\}$,   hence it sends the 
 set $\{ \s_{ij}\}$ to the set $\{ - \s_{ij}\}$.  While $\mathfrak S_3$ permutes the set $\sM' : = \{ \pm s_{ij}\}$, hence
also the set $\{ \pm \s_{ij}\}$.  Finally, $(3,4)$ permutes the set $\sM$. Since the above elements generate
$\mathfrak S_5$, $\sM$ is $\mathfrak S_5$-invariant.  The orbit of $s_{21}$ inside $\sM'$ for the subgroup generated
by   $\mathfrak S_3$ and $\tau : = (1,2,3)(4,5)$ has at least 7 elements, hence it equals $\sM'$. Since 
$(3,4)$ sends $s_{21}$ to $- \s_{32}$, follows that $\sM$ is a single  $\mathfrak S_5$-orbit.
Hence the stabilizer of $s_{21}$ has cardinality 5: but we know that it contains 
$(1,2,3,4,5)$.

 Indeed, $$ \divi (s_{21}) = E_{25} + E_{53} + E_{31} +E_{14} + E_{42} = \sum_{i=1}^5 E_{i, i+3}.$$

\end{proof}

We can summarize part of our discussion in the following corollary \footnote{a referee suggested a
  lengthy  alternative proof of this corollary using the isomorphism with the ring of invariants of five points on the line;
  here to an oriented  pentagon corresponds a  product of Pl\"ucker coordinates of a $2 \times 5 $ matrix,
  as done in the paper by Howard, Millson, Snowden, Vakil :  `The ideal of relations for the ring of invariants of $n$ points on the line', JEMS 14 (2012), 1, 1-60.}.

\begin{corollary}
There is a natural bijection between the set $\sM$ and the set of oriented nondegenerate combinatorial pentagons
on $\{1,2,3,4,5\}$, in such a way that  $-m$ is the oppositely oriented pentagon of $m$. $ \sM / \pm 1$ is then in bijection
with the set $\sP$ of pentagons,  where $\pm \s_{ij}$ corresponds to the neighbouring pentagon of $\pm s_{ij}$.

Defining
$$
\sigma(p)(i) := p(2i),
$$
we get  an order 4 transformation $\s$  on $\sM$, such that $\s^2 (m) = - m \ \forall m \in \sM$, and inducing an  involution  on $ \sM / \pm 1$, 
which exchanges $\pm s_{ij}$ with $ \pm \s_{ij}$,
hence such that $ ( \sM / \pm 1) / \s$ is in bijection with the set $\sD \sP$ of double pentagons.
\end{corollary}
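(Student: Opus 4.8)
The plan is to compare $\sM=\{\pm s_{ij},\pm\s_{ij}\}$ with the set $\widetilde{\sP}$ of oriented nondegenerate combinatorial pentagons on $\{1,\dots,5\}$ as $\mathfrak S_5$-sets, and then to let the elementary group theory of $\mathfrak S_5$ do most of the work. First I would record what is needed. By part (d) of the preceding Proposition, $\mathfrak S_5$ acts transitively on $\sM$ with $\mathrm{Stab}_{\mathfrak S_5}(s_{21})=C:=\langle(1,2,3,4,5)\rangle$, so $|\sM|=24$; by the Remark following the Definition, $\widetilde{\sP}=\mathfrak S_5/(\ZZ/5)$ is likewise transitive of cardinality $24$, and its quotient by orientation reversal is the set $\sP=\mathfrak S_5/D_5$ of the $12$ unoriented pentagons, so the point stabilizers in $\widetilde{\sP}$ have order $5$ and those in $\sP$ have order $10$. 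Parts (a)--(b) of the preceding Proposition give an $\mathfrak S_5$-equivariant bijection $\delta\colon\sM/{\pm 1}\to\sP$, the divisor map, with $\delta(\{\pm s_{ij}\})=[\divi(s_{ij})]$ and with $[\divi(\s_{ij})]$ equal to the pentagon neighbouring $[\divi(s_{ij})]$. I would then fix the equivariant lift of $\delta$ as follows: put $P_0:=\delta(\{\pm s_{21}\})$; since $C$ fixes $s_{21}$ it fixes $P_0$, so $C$ is an order-$5$ subgroup of $\mathrm{Stab}_{\mathfrak S_5}(P_0)$, a group of order $10$. For any orientation $p_0$ of $P_0$, the subgroup $\mathrm{Stab}_{\mathfrak S_5}(p_0)$ has order $5$ and lies in $\mathrm{Stab}_{\mathfrak S_5}(P_0)$; as a group of order $10$ has a unique subgroup of order $5$, this forces $\mathrm{Stab}_{\mathfrak S_5}(p_0)=C=\mathrm{Stab}_{\mathfrak S_5}(s_{21})$. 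Hence there is a unique $\mathfrak S_5$-equivariant bijection $f\colon\sM\to\widetilde{\sP}$ with $f(s_{21})=p_0$.

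Next I would show that $-m$ maps to the oppositely oriented pentagon $\overline{f(m)}$, where $\bar p(i):=p(-i)$. The group of $\mathfrak S_5$-equivariant self-bijections of a transitive $\mathfrak S_5$-set with point stabilizer $C$ is $N_{\mathfrak S_5}(C)/C=\Aff(1,\ZZ/5)/(\ZZ/5)\cong\ZZ/4$, which is cyclic and therefore has a unique element of order $2$. On $\sM$ the involution $m\mapsto-m$ is $\mathfrak S_5$-equivariant (it is the restriction of the scalar $-1$ on $V$) and nontrivial, hence of order $2$; on $\widetilde{\sP}$ the orientation reversal $p\mapsto\bar p$ is $\mathfrak S_5$-equivariant and of order $2$. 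Being the unique order-$2$ elements of these two automorphism groups, which are isomorphic via $f$, they correspond under $f$, i.e.\ $f(-m)=\overline{f(m)}$. In particular $f$ descends to a bijection $\bar f\colon\sM/{\pm 1}\to\widetilde{\sP}/(\text{reversal})=\sP$, which is $\mathfrak S_5$-equivariant and agrees with $\delta$ at $\{\pm s_{21}\}$, hence $\bar f=\delta$. Transporting the combinatorial pentagon structure to $\sM$ via $f$, one may henceforth write $m(i)$ for $m\in\sM$, $i\in\ZZ/5$, and by $\bar f=\delta$ together with the last statement about $\delta$ the class $\{\pm\s_{ij}\}$ is the neighbouring pentagon of $\{\pm s_{ij}\}$.

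Finally, I would treat $\s$. On $\widetilde{\sP}$ put $\s(p)(i):=p(2i)$; this is well defined on oriented pentagons, is $\mathfrak S_5$-equivariant (it reindexes only the source, whereas $\mathfrak S_5$ acts on the target), satisfies $\s^4=\mathrm{id}$, and, since $2^2\equiv-1\pmod 5$, gives $\s^2(p)(i)=p(-i)=\bar p(i)$; thus $\s^2$ is orientation reversal and $\s$ has order exactly $4$, generating the $\ZZ/4$ above. Transported to $\sM$ by $f$, $\s$ becomes the map $m\mapsto(i\mapsto m(2i))$ of the statement, still of order $4$, with $\s^2(m)=-m$ by the previous step. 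Since $\s^2$ equals the involution $m\mapsto-m$, $\s$ commutes with it and so descends to an involution $\bar\s$ of $\sM/{\pm 1}$; under $\bar f=\delta$ this corresponds to the descent of $\s$ on $\widetilde{\sP}$, which by item (6) of the Definition is exactly the operation ``pass to the neighbouring pentagon'' on $\sP$. Since $\delta$ sends $\{\pm s_{ij}\}$ to $[\divi(s_{ij})]$ and $\{\pm\s_{ij}\}$ to its neighbour $[\divi(\s_{ij})]$, it follows that $\bar\s$ exchanges $\{\pm s_{ij}\}$ with $\{\pm\s_{ij}\}$. As the neighbouring involution of $\sP$ is fixed-point free (a pentagon and its neighbour share no side, item (6)), item (7) of the Definition identifies $\sP/(\text{neighbouring})$ with the $6$-element set $\sD\sP$ of double pentagons; therefore $(\sM/{\pm 1})/\s\cong\sD\sP$, which by part (c) of the preceding Proposition is in natural bijection with the six quadrics $Q_{ij}=s_{ij}\s_{ij}$.

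The hard part is really the first step: producing a \emph{single} equivariant bijection $f$ that is simultaneously compatible with the sign involution $m\mapsto-m$, with the divisor bijection $\delta$, and with the order-$4$ map $\s$. This is forced by the determination of the point stabilizer $C$ in part (d) of the preceding Proposition, together with the observations that $\mathrm{Aut}_{\mathfrak S_5}(\sM)\cong\ZZ/4$ is cyclic, contains $m\mapsto-m$ as its unique element of order $2$, and is generated by $\s$; everything after that is bookkeeping within the combinatorics already set up in the Definition and the preceding Proposition.
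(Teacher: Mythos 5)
Your proof is correct and follows essentially the same route as the paper: both rest on the fact that $\sM$ and the set of oriented nondegenerate combinatorial pentagons are transitive $\mathfrak S_5$-sets with the same point stabilizer $C=\langle(1,2,3,4,5)\rangle$, and both derive $\s^2(m)=-m$ from $2^2\equiv -1 \pmod 5$. The one place you go beyond the paper is in explicitly justifying that the sign involution $m\mapsto -m$ corresponds to orientation reversal, via the uniqueness of the order-$2$ element of $N_{\mathfrak S_5}(C)/C\cong\ZZ/4$; the paper leaves this compatibility implicit, so this is a welcome (but not route-changing) addition.
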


\begin{proof}
 The bijection follows immediately from the fact that we have two transitive actions, and  the stabilizer of $s_{21}$ is the cyclic subgroup generated by $(1,2,3,4,5)$,
which is also the stabilizer of the standard pentagon corresponding to the identity map. 

We define then $\s (m)$ by the property that it associates to  an oriented pentagon $p(i)$ the neighbouring oriented pentagon $p(2i)$;
from this definition follows that $\s^2 (m) = - m$. Moreover, since $-m$ is the pentagon $p(-i)$, and $p(2(-i)) = p(- (2i))$
we obtain $ \s(- m)= - \s (m)$. Hence $\s(m) m =  \s(- m) (-m)$ inside $S^2(V)$. 
\footnote{Indeed $\s(s_{ij} )= \s_{ij}, \s(\s_{ij}) = - s_{ij}$. Observe moreover that, by  what we observed in remark 1.3,  $\s$ is not induced by a linear map of $V$!}

\end{proof}

Recall now that we have bijections:

\begin{itemize}
\item
$ \sM \cong \mathfrak S_5 / (\ZZ/5) =  \mathfrak S_5 / \langle (1,2,3,4,5)\rangle $ (oriented pentagons)
\item 
$ \sP \cong \sM  / \pm 1 \cong \mathfrak S_5 / D_5 =  \mathfrak S_5 / \langle (1,2,3,4,5), (1,4)(2,3) \rangle $ (pentagons)
\item
$ \sD \sP \cong ( \sM / \pm 1) / \s  \cong \mathfrak S_5 / \Aff(1, \ZZ/5) =  \mathfrak S_5 / \langle (1,2,3,4,5), (1,2,4,3) \rangle $ (double pentagons)
\end{itemize}

\begin{remark}
Observe that $\sigma$ corresponds to multiplication on the right by $(1,2,4,3)$ on the cosets $gH$, where $H$ is any of the above three subgroups.
\end{remark}
 Next we show that all irreducible representations of  $\mathfrak S_5$ of dimension different from 4 \footnote {These are easily gotten by the standard permutation
representation on 5 elements!} are contained in the permutation representation associated to the set $\sM$, i.e., to the set of oriented pentagons.

\begin{theorem}
 Let $M$ be the permutation representation associated to $\sM$, $D$ the permutation representation associated to   $ \sD \sP$, 
$P$ the permutation representation associated to   $  \sP$.

Then 
\begin{itemize}
\item $ W'  = W \oplus \epsilon = D \otimes \e$, and
\item
$ M = V \oplus (V \otimes \e) \oplus D \oplus (D \otimes \e) = V  \oplus V \oplus    [(W \otimes \e) \oplus  \chi_1] \oplus [W \oplus \e]$.
\end{itemize} 
\end{theorem}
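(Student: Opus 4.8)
The statement is the conjunction of the two displayed decompositions, and I would treat them in turn; both come down to identifying the relevant $\mathfrak{S}_5$-sets and to the character table already reproduced.

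\textbf{The decomposition $W' = W\oplus\e = D\otimes\e$.} The identity $W' = W\oplus\e$ I would get from the surjective $\mathfrak{S}_5$-equivariant multiplication map $\Phi\colon\Sym^2(V)\to H^0(Y,\hol_Y(2))$: since $\Phi(Q_{ij}) = \Sigma$ for all $i\neq j$ and $\CC\Sigma$ is the $\e$-eigenspace of $H^0(Y,\hol_Y(2))$ (as observed in Section~\ref{section1}), the restriction $\Phi|_{W'}\colon W'\to\CC\Sigma\cong\e$ is onto with kernel $W'\cap\ker\Phi = W'\cap W = W$, and $0\to W\to W'\to\e\to 0$ splits over $\CC$. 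For $W'\cong D\otimes\e$ I would show that the $\mathfrak{S}_5$-action permutes the six lines $\CC Q_{ij}$ exactly as $\mathfrak{S}_5$ acts on $\sD\sP$: writing $Q_{ij} = s_{ij}\sigma_{ij}$ in $\Sym^2(V)$, using that $\sM$ is a single $\mathfrak{S}_5$-orbit and that on $\sP = \sM/\{\pm 1\}$ the neighbouring-pentagon involution $\sigma$ is $\mathfrak{S}_5$-equivariant (it is right translation by $(1,2,4,3)$ on $\mathfrak{S}_5/D_5$, and $(1,2,4,3)$ normalises $D_5$), one sees that $\tau(s_{ij})$ and $\tau(\sigma_{ij})$ again form a neighbouring pair, so $\tau(Q_{ij}) = \pm Q_{k\ell}$ where $Q_{k\ell}$ corresponds to the image of the double pentagon of $Q_{ij}$. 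Thus $W'$ is the permutation module on $\sD\sP$ twisted by a linear character $\delta\colon\mathfrak{S}_5\to\{\pm1\}$, hence $\delta\in\{\chi_1,\e\}$; applying $\Phi$ and $\tau\cdot\Sigma = \e(\tau)\Sigma$ forces $\delta=\e$, i.e. $W'\cong D\otimes\e$. (Equivalently, $\delta=\e$ because $D$ does not contain $\e=\chi_2$ whereas $W'$ surjects onto $\e$.)

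\textbf{The decomposition $M = V\oplus(V\otimes\e)\oplus D\oplus(D\otimes\e)$.} Here $M=\CC[\sM]$, and by item (d) of the proposition on geometrical pentagons $\sM\cong\mathfrak{S}_5/C$ as $\mathfrak{S}_5$-sets with $C=\langle(1,2,3,4,5)\rangle$; likewise $\sD\sP\cong\mathfrak{S}_5/\Aff(1,\ZZ/5)$. Hence $M=\mathrm{Ind}_C^{\mathfrak{S}_5}(\mathbf1)$ and $D=\mathrm{Ind}_{\Aff(1,\ZZ/5)}^{\mathfrak{S}_5}(\mathbf1)$, and Frobenius reciprocity reduces $\langle M,\chi_i\rangle$ and $\langle D,\chi_i\rangle$ to averaging $\chi_i$ over $C$, respectively $\Aff(1,\ZZ/5)$, against the character table. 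This gives $D=\chi_1\oplus\chi_6=\chi_1\oplus(W\otimes\e)$ (so $D\otimes\e=\chi_2\oplus\chi_5=\e\oplus W$, consistent with the first decomposition) and $M=\chi_1\oplus\chi_2\oplus\chi_5\oplus\chi_6\oplus 2\chi_7$. Since $\chi_V=\chi_7$ and $\chi_7=\chi_7\cdot\e$, we have $V\cong V\otimes\e$ by Theorem~\ref{v}, so this reads $M=V\oplus(V\otimes\e)\oplus D\oplus(D\otimes\e)$, and substituting $D=\chi_1\oplus(W\otimes\e)$ yields the second displayed form.

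A more structural version of the second decomposition, which also exhibits the summands geometrically, would run: the central involution $m\mapsto -m$ of $\sM$ is $\mathfrak{S}_5$-equivariant, so $M=M^{+}\oplus M^{-}$ with $M^{+}=\CC[\sP]=P$; the maps $M\to V$, $e_m\mapsto m$, and $M\to V\otimes\e$, $e_m\mapsto\sigma(m)$ (the latter equivariant because $\tau(\sigma(m))=\e(\tau)\sigma(\tau(m))$, the same sign bookkeeping as above) both kill $M^{+}$ and together surject $M^{-}\twoheadrightarrow V\oplus(V\otimes\e)$, an isomorphism since both sides are $12$-dimensional and the two maps are not proportional (e.g. $\sigma(s_{12})=\sigma_{12}$ is not a multiple of $s_{12}$); while $P=\CC[\sP]$ splits under the equivariant involution $\bar\sigma$ into $P^{+}=\CC[\sD\sP]=D$ and $P^{-}\cong D\otimes\e$. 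I expect the one genuinely delicate point throughout to be exactly this recurring sign issue — checking that the twisted permutation modules that appear ($W'$ over $\sD\sP$, and $M^{-}$, $P^{-}$) are twisted by $\e$ and not by the trivial character — which is settled either by coupling with $\Phi$ and the $\e$-eigenvector $\Sigma$, or by a single explicit character value, or by the Frobenius-reciprocity bookkeeping of the preceding paragraph.
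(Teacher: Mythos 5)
Your argument is correct and follows essentially the same route as the paper: decomposing $M$ via the permutation characters of $\mathfrak S_5/\langle(1,2,3,4,5)\rangle$, $\mathfrak S_5/D_5$ and $\mathfrak S_5/\Aff(1,\ZZ/5)$ read off against the character table (the paper's fixed-point-counting lemma is exactly your Frobenius reciprocity computation), identifying $W'=W\oplus\e$ from the surjection onto $\CC\Sigma$, and pinning down the twist on $W'$ by the fact that $D=\chi_1\oplus\chi_6$ contains no copy of $\e$ while $W'$ does. The one step you elide is that a monomial module whose underlying permutation of lines is $\sD\sP$ is a priori $\mathrm{Ind}_{\Aff(1,\ZZ/5)}^{\mathfrak S_5}(\lambda)$ for a character $\lambda$ of the stabilizer, not automatically a global twist $D\otimes\delta$; this is harmless here, since both $\{\pm1\}$-valued characters of $\Aff(1,\ZZ/5)$ extend to $\mathfrak S_5$ and the projection formula then yields $D\otimes\delta$, whereas the paper sidesteps the point by instead comparing the characters of $W'$ and $D\otimes\e$ restricted to $\mathfrak S_3$.
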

\begin{proof}
First of all, $\s$ induces an action of $\ZZ/4$ on $M $, hence $M $ splits according to the 4 eigenvalues $ \pm \sqrt{-1}, \pm 1$.

The eigenspace for $+1$ is generated by the vectors (of $M$!) 
$$m + \s(m) + (-m ) + (- \s(m)),$$
 hence it clearly corresponds to the representation
$D$ on the double pentagons. 

The eigenspace $D'$ for $-1$ is generated by the vectors  
$$m - (\s(m)) + (-m ) - (- \s(m)),$$
 and together with the previous yields as direct sum
the $+1$-eigenspace for $-1 = \s^2$, which is clearly $P$, the representation associated to the pentagons. Observe that  $D'$  contains the sign representation $\e$.

The eigenspace $V'$  for $ \sqrt{-1}$ is generated by the vectors  $m -  \sqrt{-1}( \s(m)) -  (-m ) +  \sqrt{-1} (- \s(m))$. The Galois group of
$\la^2 = -1$ (complex conjugation) yields an isomorphism of $V'$ with the eigenspace $V''$ for $- \sqrt{-1}$.

{\em Step 1:} $V' \cong V$.

This follows immediately from Schur's Lemma: indeed $M$ surjects onto $V$, hence $V$ appears
as a summand of $M$. But $D$ and $D'$ are reducible, hence $P$ maps trivially to $V$,
and $V'$ is isomorphic to $V$. Therefore we have 
$$
M= D \oplus D' \oplus V^{\oplus 2},
$$
and we know that $V \cong V \otimes \e$.

{\em Step 2:} We calculate the character of $M$.

In order to achieve this, we use the following obvious 
\begin{lemma}
Consider a transitive  permutation representation of a finite group $G$ on $G/H$. Then its character $\chi_H$ is given by:
$$ \chi_H (\ga) = | \{ gH | \ga g H = g H \} |  = \frac{1}{|H|} | \{ g | \ga \in g H g^{-1} \} | .$$ 
\end{lemma}

In the case of the oriented pentagons, $ H $ is spanned by $(1,2,3,4,5)$ and 
$ \chi_H (\ga) = 0$ unless $\ga$ has order dividing $5$, in which case $  \chi_H  (Id) = 24$, while $  \chi_H ((1,2,3,4,5)) = 4$,
since the normalizer of $H$ is the affine group $ \Aff(1, \ZZ/5)$. 

Thus in this case the character vector equals 
$$\chi_{M}=  (24,0,0,0,0,4,0). $$

{\em Step 3:} $\chi_D = \chi_6 + \chi_1$, $D' = D \otimes \e$.

Subtracting twice the character of $V$, we obtain $\chi_{P}=  (12,0,4,0,0,2,0)$. Alternatively, and also in order to calculate the character of $D$, we again apply the above lemma.

If we consider  pentagons and double pentagons (i.e. $H= D_5$ resp. $H = \Aff(1, \ZZ /5)$),  observe that in both cases $g H g^{-1}$ contains no elements of order $3$ and in the case of pentagons no transpositions
and no 4-cycles.
Hence the character is zero,  in the case of pentagons, unless we have 5-cycles, double transpositions, or the identity.

 A 5-cycle fixes $2$ pentagons, and exactly one double pentagon. A double transposition fixes 4 pentagons, and 2 double pentagons.
 A 4-cycle fixes 2 double pentagons: for instance, if $ i \mapsto  2 i$   fixes $\{(0,a_1,a_2,a_3,a_4), (0,a_2,a_4,a_1,a_3)\}$,
 then it must preserve the block decomposition $\{1,2,3,4\} = \{a_1, a_4\} \cup \{a_2, a_3\}$ corresponding to the two sets of neighbours of $0$; since it does not fix any pentagon,
 it must exchange these two sets,  and then it must be either $a_2= 2 a_1$
 or $a_3 = 2 a_1$. Up to choosing an appropriate  representative, we may assume $a_1 = 1$; then it must be $a_4 = -1$, 
 and  the only two choices are $a_2 = 2$ or $a_2 = -2$.
 
We conclude from these  calculations that 
$$\chi_{P}=  (12,0,4,0,0,2,0),$$
$$\chi_{D}=  (6,0,2,0,2,1,0) = \chi_1+\chi_6$$ 

We recover in this way that 
$$\chi_{M} - \chi_{P} =  (12,0,-4,0,0,2,0) = 2 \chi_V =2  \chi_7. $$
and moreover we obtain:
$$\chi_{P} - \chi_{D} =   (6,0,2,0,-2, 1,0) = \chi_{D \otimes \e}.$$

This implies that $D' = D \otimes \e$.

{\em Step 4:} We shall show that $D \cong W' \otimes \e$ and $W' \cong W\oplus \e$.

In fact, $W' = \langle Q_{ij} \rangle$ surjects onto the one dimensional representation $\e$ corresponding to $\CC \Sigma \subset H^0(\hol_Y(2))$. In particular, $W' = W \oplus \epsilon$.

Observe now that, for each $\tau \in \mathfrak S_3$, 
$$
\tau(Q_{ij} ) = \epsilon(\tau) Q_{\tau(i)\tau(j)},
$$
while, defining 
$$
d_{ij}:= \left( (s_{ij}) + (-s_{ij}) + (\s_{ij}) + (-\s_{ij}) \right),
$$
 we have $\tau(d_{ij}) = d_{\tau(i)\tau(j)}$. 
 
 Hence $W'$ and $D \otimes \e$ have the same character on $\mathfrak S_3$. It follows that also $W'$ is the direct sum of an irreducible five dimensional representation with a one dimensional one, and either $W \cong D$ or $W \cong D \otimes \e$. The first possibility is excluded since $W'$ contains $\epsilon$.
 
\end{proof}

\begin{remark}\label{formulae}
We have indeed precise formulae for the action on the space $W'$.

By Lemma \ref{s3} and Remark \ref{sigma} we know that for $\tau \in \mathfrak{S}_3$ we have that
$\tau(Q_{ij}) = \sgn(\tau)Q_{\tau(i) \tau(j)}$.

In particular:

\underline{$\tau = (1,2)$:}
$$
Q_{12}   \leftrightarrow   -Q_{21}, \ Q_{13}  \leftrightarrow  -Q_{23},  \ Q_{31}  \leftrightarrow  -Q_{32}.
$$

From the proof of Theorem \ref{v} we see that  other elements of  $\mathfrak{S}_5$ act as follows:

\underline{$\tau = (3,4)$:}
$$
Q_{12}   \leftrightarrow   -Q_{31}, \ Q_{13}  \leftrightarrow  -Q_{23},  \ Q_{21}  \leftrightarrow  -Q_{32},
$$

\underline{$\tau = (4,5)$:}
$$
Q_{12}   \leftrightarrow   -Q_{13}, \ Q_{21}  \leftrightarrow  -Q_{23},  \ Q_{31}  \leftrightarrow  -Q_{32},
$$

\underline{$\tau = (1,2,3,4,5)$:}
$$
Q_{12}   \mapsto   Q_{31}, \ Q_{13}  \mapsto Q_{12},  \ Q_{21}  \mapsto  Q_{21},
$$
$$
Q_{23}   \mapsto  Q_{13}, \  Q_{31} \mapsto  Q_{32},  \ Q_{32}  \mapsto  Q_{23}.
$$

\end{remark}

For future use we also show:

\begin{lemma}
The determinant of $W$ is the trivial representation, equivalently, $\bigwedge^6 W' = \e$.
\end{lemma}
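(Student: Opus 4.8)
The plan is to reduce the whole statement to evaluating a one-dimensional character of $\mathfrak S_5$ at a single transposition.

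\textbf{Step 1 (the equivalence).} First I would deduce the two formulations from each other using the theorem proved just above, which gives $W'\cong W\oplus\e$ as $\mathfrak S_5$-representations. Since $\dim W=5$ and $\e$ is one-dimensional, in the decomposition $\bigwedge^6(W\oplus\e)=\bigoplus_{p+q=6}\bigwedge^pW\otimes\bigwedge^q\e$ the only surviving summand is the one with $p=5$, $q=1$, so $\bigwedge^6W'\cong(\det W)\otimes\e$. As $\e\otimes\e=\chi_1$, this shows that $\det W$ is trivial if and only if $\bigwedge^6W'=\e$, and it suffices to prove the latter.

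\textbf{Step 2 (computation of $\bigwedge^6W'$).} Now $\bigwedge^6W'$ is a one-dimensional representation of $\mathfrak S_5$, hence equals $\chi_1$ or $\e$, and it is determined by the sign $\det(\tau\mid W')$ for any transposition $\tau$. I would take $\tau=(1,2)$ and read its action on the basis $Q_{12},Q_{21},Q_{13},Q_{23},Q_{31},Q_{32}$ of $W'$ off Remark~\ref{formulae}: it interchanges $Q_{12}$ with $-Q_{21}$, $Q_{13}$ with $-Q_{23}$, and $Q_{31}$ with $-Q_{32}$. Thus the matrix of $\tau$ on $W'$ is block diagonal with three copies of $\bigl(\begin{smallmatrix}0&-1\\-1&0\end{smallmatrix}\bigr)$, each of determinant $-1$, so $\det(\tau\mid W')=(-1)^3=-1$. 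Therefore $\bigwedge^6W'=\e$, and Step~1 then yields $\det W=\chi_1$.

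\textbf{Main obstacle.} There is essentially no obstacle: the computation is routine once the $\mathfrak S_5$-action on the $Q_{ij}$ from Remark~\ref{formulae} is available. The only points that need a little care are the use of the previously established isomorphism $W'\cong W\oplus\e$ for the ``equivalently'' clause, and the (standard) fact that a one-dimensional character of $\mathfrak S_5$ is pinned down by its value on a transposition. As a consistency check one can also evaluate on the $5$-cycle $\varphi=(1,2,3,4,5)$: by Remark~\ref{formulae} it acts on $W'$ as the genuine permutation sending $Q_{12}\to Q_{31}\to Q_{32}\to Q_{23}\to Q_{13}\to Q_{12}$ and fixing $Q_{21}$, a single $5$-cycle of determinant $+1$, in agreement with $\e(\varphi)=1$.
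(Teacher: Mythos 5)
Your proof is correct and follows essentially the same route as the paper: both use $W'\cong W\oplus\e$ to reduce to computing $\bigwedge^6W'$, and both evaluate the action of the transposition $(1,2)$ on the basis $Q_{ij}$ of $W'$ to find determinant $-1$. Your block-matrix bookkeeping is in fact slightly more explicit about the minus signs in $Q_{ij}\mapsto-Q_{\tau(i)\tau(j)}$ (which cancel, there being six of them), and the sanity check on the $5$-cycle is a nice touch, but the argument is the paper's.
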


\begin{proof}
Since $W' \cong W \oplus \e$, we have that $\bigwedge^6 W' = \e$ if and only if $\bigwedge^5 W = \chi_1$.

A basis of $\bigwedge^6 W'$ consists of the element $\omega:=Q_{12} \wedge Q_{13} \wedge Q_{21} \wedge Q_{23} \wedge Q_{31} \wedge Q_{32}$. Then $\tau = (1,2)$ maps $\omega$ to
$$
Q_{21} \wedge Q_{23} \wedge Q_{12} \wedge Q_{13} \wedge Q_{32} \wedge Q_{31} = -\omega,
$$
whence the claim follows.

\end{proof}

\section{$\mathfrak{S}_5$-equivariant resolution of the sheaf of regular functions on the Del Pezzo surface $Y \subset \PP^5$ of degree $5$}

By Theorem \ref{v}, 2) we know that $\chi_{\bigwedge^6V} = \chi_2 = \e$.

Consider the Euler sequence on $\PP^5 := \PP(V)$, $V = H^0(Y, \hol_Y(-K_Y))$:
$$
0 \ra \Omega^1_{\PP^5} \ra V \otimes \hol_{\PP^5}(-1) \ra \hol_{\PP^5} \ra 0.
$$
Then 
$$
\bigwedge^6(V \otimes \hol_{\PP^5}(-1)) \cong \bigwedge^5  \Omega^1_{\PP^5} \otimes  \hol_{\PP^5},
$$

and this implies that
$$
\omega_{\PP^5} \cong (\det V) \otimes \hol_{\PP^5}(-6) \cong \hol_{\PP^5}(-6) \otimes \epsilon,
$$
 where $\epsilon$ is the sign representation.

We take a Hilbert resolution of $\hol_Y$ ($P:=\PP^5$):
\begin{equation}\label{hilbert}
0 \ra U \otimes \hol_P(-5) \ra W'' \otimes \hol_P(-3) \ra W \otimes \hol_P(-2) \ra  \hol_P \ra \hol_Y \ra 0,
\end{equation}
where $W,W'',U$ are $\mathfrak{S}_5$-representations  (a posteriori they shall be irreducible of  respective dimensions $5,5,1$,
at this stage we only know that $\dim(W) = 5$). 

Applying $\mathcal{H}om( \cdot , \omega_P)$ to the above resolution, we obtain a resolution of 
$$
\mathcal{E}xt^3(\hol_Y, \omega_P) \cong \omega_Y \cong \hol_Y(-1):
$$

\begin{equation}
0 \ra \omega_P \ra W^{\vee} \otimes \omega_P(2) \ra (W'')^{\vee} \otimes \omega_P(3) \ra  U^{\vee} \otimes \omega_P(5) \ra \hol_Y(-1) \ra 0.
\end{equation}

This equals
\begin{equation}
0 \ra \epsilon \otimes \hol_P(-6) \ra \epsilon \otimes W^{\vee} \otimes \hol_P(-4) \ra (W'')^{\vee} \otimes \epsilon \otimes \hol_P(-3) \ra  \epsilon \otimes U^{\vee} \otimes \hol_P(-1)
\ra \hol_Y(-1) \ra 0, 
\end{equation}

which after twisting by $\hol_P(1)$ becomes:
\begin{equation}\label{hilbdual}
0 \ra \epsilon \otimes \hol_P(-5) \ra \epsilon \otimes W^{\vee} \otimes \hol_P(-3) \ra (W'')^{\vee} \otimes \epsilon \otimes \hol_P(-2) \ra  \epsilon \otimes U^{\vee} \otimes \hol_P
\ra \hol_Y \ra 0.
\end{equation}

By the uniqueness of a minimal graded free resolution up to isomorphism, we get that (\ref{hilbert}) is isomorphic to (\ref{hilbdual}), hence:
$$
U \cong \epsilon, \ \ W'' \cong W^{\vee} \otimes \epsilon.
$$

Therefore we have proven the following:
\begin{prop}
The self dual  Hilbert resolution of $\hol_Y$ is given by:
\begin{equation}
0 \ra \epsilon \otimes \hol_P(-5) \stackrel{B^{\vee}}\ra \epsilon \otimes W^{\vee} \otimes \hol_P(-3) \stackrel{A} \ra W  \otimes \hol_P(-2) \stackrel{B} \ra   \hol_P \ra \hol_Y \ra 0,
\end{equation}
where $A \in W \otimes (W \otimes \epsilon) \otimes V$, $B \in W^{\vee} \otimes S^2(V) = \Hom(W,S^2(V))$, and $A$ is anti symmetric.

\end{prop}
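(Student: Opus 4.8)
The plan is to assemble the displayed self-dual complex from the data already in hand, to identify the two nontrivial differentials as tensors, and then to deduce skew-symmetry from Buchsbaum--Eisenbud. First, substituting the identifications $U\cong\epsilon$ and $W''\cong W^{\vee}\otimes\epsilon$ established just above into the Hilbert resolution (\ref{hilbert}) produces precisely the four free modules $\epsilon\otimes\hol_P(-5)$, $\epsilon\otimes W^{\vee}\otimes\hol_P(-3)$, $W\otimes\hol_P(-2)$, $\hol_P$ appearing in the statement. Every differential of (\ref{hilbert}) is $\mathfrak{S}_5$-equivariant, since $\Aut(Y)\cong\mathfrak{S}_5$ acts on $\hol_Y$ compatibly with its $\hol_P$-module structure and the minimal graded free resolution is unique up to isomorphism, so the $\mathfrak{S}_5$-action it carries is canonical. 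The isomorphism of (\ref{hilbert}) with its dual (\ref{hilbdual}), which is what ``self dual'' refers to, is then $\mathfrak{S}_5$-equivariant as well, and one normalises it so that the first differential is exactly the $\mathcal{H}om(\cdot,\omega_P)$-transpose $B^{\vee}$ of the last differential $B$.

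Next I would read off $B$ and $A$ as tensors. The map $B\colon W\otimes\hol_P(-2)\ra\hol_P$ is $\hol_P$-linear of degree $2$, hence corresponds to an element of $\Hom_{\CC}(W,H^0(\hol_P(2)))=\Hom(W,S^2(V))=W^{\vee}\otimes S^2(V)$; and since its image on global sections of the relevant twist is the degree-$2$ piece $H^0(\sI_Y(2))=W$ of the homogeneous ideal, this tensor is simply the canonical equivariant inclusion $W=H^0(\sI_Y(2))\into S^2(V)$. Likewise $A\colon\epsilon\otimes W^{\vee}\otimes\hol_P(-3)\ra W\otimes\hol_P(-2)$ is $\hol_P$-linear of degree $1$, so it corresponds to an element of $\Hom_{\CC}(W^{\vee}\otimes\epsilon,W)\otimes H^0(\hol_P(1))=(W^{\vee}\otimes\epsilon)^{\vee}\otimes W\otimes V=(W\otimes\epsilon)\otimes W\otimes V\cong W\otimes(W\otimes\epsilon)\otimes V$, using $\epsilon^{\vee}=\epsilon$; concretely $A$ is a $5\times 5$ matrix of linear forms.

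Finally, for the skew-symmetry: $Y\subset\PP^5$ is arithmetically Cohen--Macaulay, since the minimal resolution (\ref{hilbert}) has length $3$, equal to the codimension, and it is subcanonical, $\omega_Y\cong\hol_Y(-1)$, hence arithmetically Gorenstein of codimension $3$. The Buchsbaum--Eisenbud structure theorem \cite{be} then says that its ideal is cut out by the $4\times 4$-Pfaffians of a skew-symmetric $5\times 5$ matrix of linear forms, which must be $A$, with resolution exactly the displayed self-dual one, so $A=-\,^tA$. Equivalently, without quoting the full theorem, the isomorphism of (\ref{hilbert}) with (\ref{hilbdual}) identifies $A$ with $\pm\,^tA$, and the sign is forced to be $-1$ by the parity of this length-$3$, rank-$(1,5,5,1)$ self-dual complex (a symmetric odd-size matrix would have a Pfaffian ideal of the wrong codimension). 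The step I expect to be the main obstacle is not the skew-symmetry itself but the verification that the Buchsbaum--Eisenbud self-duality, and hence the skew matrix $A$, can be chosen $\mathfrak{S}_5$-equivariantly rather than merely abstractly; this follows from the canonicity of the minimal graded free resolution and of the Buchsbaum--Eisenbud pairing attached to it, but it is the point that needs care, the remainder being bookkeeping with twists and duals.
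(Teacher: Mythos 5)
Your argument follows the paper's own route: dualize the Hilbert resolution against $\omega_P\cong\epsilon\otimes\hol_P(-6)$, invoke the uniqueness of the minimal graded free resolution to identify $U\cong\epsilon$ and $W''\cong W^{\vee}\otimes\epsilon$, and appeal to the Buchsbaum--Eisenbud structure theorem for the antisymmetry of $A$ (exactly the role \cite{be} plays in the paper), with the equivariance point you flag being glossed over in the paper as well. The only blemish is your parenthetical alternative for ruling out the symmetric case --- a symmetric matrix has no Pfaffian ideal, so that remark is not an argument --- but your primary appeal to the structure theorem is the same justification the paper uses, so this does not affect correctness.
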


Observe that $W \otimes (W \otimes \epsilon) \otimes V \cong W \otimes W \otimes  V$ as representations, since $V \cong V \otimes \epsilon$. Therefore we look for $A \in (\wedge^2 W \otimes V )^{\mathfrak{S}_5}$, and we claim that $A$ is uniquely determined up to scalars.

\begin{lemma}
The natural inclusion 
$$
\wedge^2 W \otimes V \subset \wedge^2 W' \otimes V
$$
induces an isomorphism
$$(\wedge^2 W \otimes V )^{\mathfrak{S}_5} \cong (\wedge^2 W' \otimes V )^{\mathfrak{S}_5} \cong \CC .
$$
\end{lemma}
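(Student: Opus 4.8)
The key observation is that $W' \cong W \oplus \e$ as $\mathfrak S_5$-representations, so $\wedge^2 W' \cong \wedge^2 W \oplus (W \otimes \e)$. Tensoring with $V$ and taking $\mathfrak S_5$-invariants, the right-hand side contributes $(\wedge^2 W \otimes V)^{\mathfrak S_5} \oplus (W \otimes \e \otimes V)^{\mathfrak S_5}$. Since $V \cong V \otimes \e$, the second summand is $(W \otimes V)^{\mathfrak S_5} = \Hom_{\mathfrak S_5}(W^\vee, V)$; as $W = W_5$ and $V = W_7$ are non-isomorphic irreducible representations (and $W^\vee \cong W$ since $W$ is self-dual, being a real representation of a symmetric group), Schur's Lemma gives that this space is zero. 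Hence the natural inclusion $(\wedge^2 W \otimes V)^{\mathfrak S_5} \hookrightarrow (\wedge^2 W' \otimes V)^{\mathfrak S_5}$ is an isomorphism, which is the first claimed isomorphism.

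For the second claimed isomorphism $(\wedge^2 W' \otimes V)^{\mathfrak S_5} \cong \CC$, I would compute the dimension via character theory: $\dim (\wedge^2 W' \otimes V)^{\mathfrak S_5} = \langle \chi_{\wedge^2 W'} \cdot \chi_V, \chi_1 \rangle = \langle \chi_{\wedge^2 W'}, \chi_V \rangle$, using that $\chi_V$ is real. Now $\chi_{W'} = \chi_D = \chi_1 + \chi_6$ by the preceding theorem, so $\wedge^2 W' = \wedge^2(\chi_1 \oplus W_6) = \chi_6 \oplus \wedge^2 W_6$. The character of $\wedge^2 W'$ on a conjugacy class element $g$ is $\tfrac12(\chi_{W'}(g)^2 - \chi_{W'}(g^2))$; I would evaluate this on the seven conjugacy classes using the character table (Table 1) together with the power maps $g \mapsto g^2$ on conjugacy classes (e.g. a $4$-cycle squares to a double transposition, a $6$-element $(1,2,3)(4,5)$ squares to a $3$-cycle, a $5$-cycle squares to a $5$-cycle, etc.), and then take the inner product of the resulting character vector with $\chi_V = \chi_7 = (6,0,-2,0,0,1,0)$. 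The arithmetic should yield exactly $1$.

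Alternatively — and this is perhaps cleaner — I would decompose $\wedge^2 W_6$ into irreducibles: since $W_6 = W_5 \otimes \e$, one has $\wedge^2 W_6 \cong \wedge^2 W_5$, and $\wedge^2$ of a $5$-dimensional representation is $10$-dimensional, so $\wedge^2 W' = W_6 \oplus \wedge^2 W_5$ is $15$-dimensional, matching the $15$ Pfaffians. Decomposing the $10$-dimensional $\wedge^2 W_5$ against the character table, one finds it contains $W_7 = V$ with some multiplicity $m$; then $\langle \chi_{\wedge^2 W'}, \chi_V \rangle = m$ since $W_6$ contributes nothing. Establishing $m = 1$ is then a single inner-product computation $\langle \chi_{\wedge^2 W_5}, \chi_7 \rangle = 1$.

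The main obstacle is essentially bookkeeping rather than conceptual: one must correctly compute $\chi_{\wedge^2 W'}$, which requires the values $\chi_{W'}(g^2)$ and hence the power map on conjugacy classes of $\mathfrak S_5$, a place where sign errors or mislabeled classes easily creep in. The conceptual content — the splitting $W' = W \oplus \e$, self-duality of $W$, and Schur's Lemma to kill the extra summand — is immediate from what has already been proved. I would present the Schur's Lemma step first to reduce to $(\wedge^2 W' \otimes V)^{\mathfrak S_5}$, and then do the explicit character computation, ideally cross-checking the count against the fact that the $15$ Pfaffians of $A'$ must span a space mapping onto $W$, which forces $A'$ to be nonzero and hence the invariant space to be at least one-dimensional, so only the upper bound $\dim \le 1$ genuinely needs the character argument.
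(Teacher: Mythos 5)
Your proposal is correct and follows essentially the same route as the paper: both split $W' \cong W \oplus \e$, kill the extra summand $(W \otimes \e \otimes V)^{\mathfrak{S}_5}$ by Schur's Lemma (using $V \cong V \otimes \e$ and the self-duality of $W$), and reduce the remaining count to the multiplicity of $V$ in $\wedge^2 W$ via the character formula $\chi_{\wedge^2 W}(g) = \tfrac12(\chi_W(g)^2 - \chi_W(g^2))$, which the paper records as the explicit decomposition $\wedge^2 W \cong V \oplus W_3$, giving $(\wedge^2 W \otimes V)^{\mathfrak{S}_5} \cong (V \otimes V)^{\mathfrak{S}_5} \cong \CC$. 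The arithmetic you defer does come out as claimed, namely $\chi_{\wedge^2 W} = (10,-2,-2,1,0,0,1) = \chi_7 + \chi_3$, so the multiplicity is one.
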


\begin{proof}
Observe that $\wedge^2 W' \cong \wedge^2(W \oplus W_2) \cong \wedge^2 W \oplus (W \otimes W_2)\cong \wedge^2 W \oplus W_6  $, where $W_6 \cong W \otimes \e$ and $\wedge^2W \cong V \oplus W_3$ (as it is easily checked by the character formula
$$
\chi_{\wedge^2 W}(g) = \frac 12 (\chi_W(g)^2-\chi_W(g^2)). )
$$

By Schur's lemma it follows that 
$$
(\wedge^2 W' \otimes V )^{\mathfrak{S}_5} \cong ((\wedge^2 W \oplus W_6) \otimes V)^{\mathfrak{S}_5} \cong (\wedge^2 W \otimes V)^{\mathfrak{S}_5} \cong
$$
$$
\cong ((V \oplus W_3)\otimes V)^{\mathfrak{S}_5} \cong (V\otimes V)^{\mathfrak{S}_5} \cong \CC .
$$
\end{proof}

We want to find the (up to a constant) unique $\mathfrak{S}_5$-equivariant map $A$, such that if $A$ corresponds to a tensor $\alpha \in (\bigwedge^2 W \otimes V)^{\mathfrak{S}_5}$, then $\alpha \wedge \alpha = B$.

We have that $\alpha \wedge \alpha \in \bigwedge^4 W \otimes S^2 V$ and, since $\bigwedge^5 W \cong \CC$ is the trivial representation, hence the pairing
$$
\bigwedge^4 W \times W \rightarrow \bigwedge^5 W \cong \CC
$$ identifies $\bigwedge^4 W$
 with $W^{\vee}$, therefore
$$
\alpha \wedge \alpha = B \in W^{\vee} \otimes S^2(V) .
$$

Since we want to find the (up to a constant) unique element $A \in (\bigwedge^2 W' \otimes V )^{\mathfrak{S}_5}$,  
we write for $1 \leq i \neq j \leq 3$, $1 \leq h \neq k \leq 3$, $1 \leq m \neq n \leq 3$,:
$$
A := \sum_{ij,hk,mn} a_{ij,hk,mn} (Q_{ij} \wedge Q_{hk}) \otimes s_{mn}, \ A_{ij,hk} := \sum_{mn} a_{ij,hk,mn} s_{mn},
$$
and use the lexicographical order for $(ij)$, resp. $(hk)$, resp $(mn)$.

Moreover, we use that $A_{ij,hk}$ is skew-symmetric, i.e. $A_{ij,hk} = - A_{hk,ij}$.

We are going to prove the following:

\begin{theorem} \label{mainresult}
Let $Y$ be the del Pezzo surface of degree $5$, embedded anticanonically in $\PP^5$. Then the ideal of $Y$ is generated by the $4 \times 4$ - Pfaffians of the $\mathfrak{S}_5$ - equivariant anti-symmetric $6 \times 6$-matrix $A=$

$$
\begin{pmatrix}
0 & s_{21}+s_{23}- & s_{12}+s_{31}- & -s_{13}-s_{21} &s_{13}+s_{32}&s_{21}+s_{32}- \\
   & -s_{31}-s_{32} &-s_{32}-s_{21}&                          &                        & -s_{12}-s_{23} \\
   \hline
-s_{21}-s_{23}+ & 0 & s_{12}+s_{23}&s_{31}+s_{23}- &s_{13}+s_{21} -&-s_{12}-s_{31} \\
+s_{31}+s_{32} &  &                        &  -s_{13}-s_{32} & -s_{23}-s_{31} & \\
\hline
-s_{12}-s_{31}+ & -s_{12}-s_{23} & 0 & s_{12}+s_{13}- & s_{12}-s_{21}+& s_{23}+s_{31} \\
s_{32}+s_{21} &                          &   &-s_{32}-s_{31} & s_{31}-s_{13} & \\
\hline
 s_{13}+s_{21} & -s_{31}-s_{23}+ & -s_{12}-s_{13}+& 0 & -s_{21} - s_{32} & s_{23}+s_{12} - \\
                        & s_{13}+s_{32} & s_{32}+s_{31} &        &                           &-s_{13}-s_{32} \\
\hline
-s_{13}-s_{32}&-s_{13}-s_{21} +& -s_{12}+s_{21}-& s_{21} +s_{32} & 0 & s_{12}+s_{13}- \\
                       & s_{23}+s_{31} & -s_{31}+s_{13} &                           &  & -s_{21}-s_{23} \\
\hline
-s_{21}-s_{32}+ &s_{12}+s_{31} &-s_{23}-s_{31} & -s_{23}-s_{12} + &-s_{12}-s_{13}+ & 0 \\
s_{12}+s_{23} &                          &                        &s_{13}+s_{23} &s_{21}+s_{23} &
\end{pmatrix}
$$
\end{theorem}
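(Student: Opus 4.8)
\emph{Overall approach.} I would derive the theorem from the Hilbert resolution (\ref{hilbert}) together with a single explicit Pfaffian computation, everything else being bookkeeping. From (\ref{hilbert}) and its self-dual form the homogeneous ideal $I_Y\subset S:=\CC[x_0,\dots,x_5]$ has minimal free resolution $0\to S(-5)\to S(-3)^{\oplus 5}\to S(-2)^{\oplus 5}\to I_Y\to 0$; in particular $I_Y$ equals the ideal generated by its degree-two part $(I_Y)_2=W=H^0(Y,\sI_Y(2))$, which is $5$-dimensional. Let $J$ be the ideal generated by the fifteen $4\times 4$-Pfaffians of $A$. Then it suffices to establish the two facts $J\subseteq I_Y$ and $W\subseteq J_2$: the first gives $J\subseteq I_Y$, and combined with the second it gives $(I_Y)_2=W\subseteq J_2\subseteq J$, hence $I_Y$ (being generated by $(I_Y)_2$) is contained in $J$, so $J=I_Y$.

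\emph{Identifying the Pfaffians.} The matrix $A$ represents the $\mathfrak S_5$-invariant tensor in $\wedge^2 W'\otimes V$ (unique up to scalar, by the lemma computing $(\wedge^2 W'\otimes V)^{\mathfrak S_5}\cong\CC$), written in the basis $(Q_{ij})_{1\le i\ne j\le 3}$ of $W'$ for rows and columns and with entries in $V'=V\otimes\e$; its equivariance is verified on the generators $(1,2)$ and $(1,2,3,4,5)$ using the action formulae recorded after Remark \ref{formulae} and in the proof of Theorem \ref{v}. Since $W'\otimes\e$ is the honest permutation representation on the six double pentagons, i.e.\ on $\mathfrak S_5/\Aff(1,\ZZ/5)$, the group $\mathfrak S_5$ permutes the six indices $Q_{ij}$ up to the global sign $\e(g)$, and this action on six points is $2$-transitive (the point-stabiliser $\Aff(1,\ZZ/5)$ is transitive on the remaining five; it is the $\mathrm{PGL}_2(\F_5)$-action on $\PP^1(\F_5)$); hence $\mathfrak S_5$ acts transitively on the $\binom 6 2=15$ pairs $\{Q_{ij},Q_{hk}\}$ and correspondingly, up to sign, transitively on the fifteen principal $4\times4$-Pfaffians of the equivariant matrix $A$. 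It therefore suffices to compute one. Taking the Pfaffian on rows and columns $\{3,4,5,6\}$, namely $A_{34}A_{56}-A_{35}A_{46}+A_{36}A_{45}$, substituting the entries displayed in the statement, and re-expressing everything in the monomial basis $\{s_{mn}s_{pq}\}$ of $\Sym^2 V$ via $Q_{ij}=s_{ij}\s_{ij}$ and $\s_{ij}=\epsilon(s_{ik}-s_{ki}-s_{jk})$, one checks directly that it equals $2(Q_{ij}-Q_{hk})$ for a definite pair of indices. By the transitivity above, the fifteen Pfaffians are then exactly the fifteen elements $\pm 2(Q_{ij}-Q_{hk})$, one for each pair.

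\emph{Conclusion.} Granting this identification, $J\subseteq I_Y$ holds because $\Phi(Q_{ij}-Q_{hk})=\Sigma-\Sigma=0$, so every $4\times4$-Pfaffian lies in $W=\ker\Phi=H^0(Y,\sI_Y(2))$ and hence vanishes on $Y$. And $W\subseteq J_2$ holds because the fifteen differences $Q_{ij}-Q_{hk}$ span the hyperplane $\{\sum a_{ij}Q_{ij}:\sum a_{ij}=0\}$ of $W'$, which is $5$-dimensional and contained in the $5$-dimensional space $W$, hence equal to it (the factor $2$ being harmless since the characteristic is not $2$). Combined with the first paragraph this gives $J=I_Y$. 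The hypothesis that the characteristic differs from $2$ is genuinely needed: in characteristic $2$ the diagonal of an antisymmetric matrix vanishes automatically, the factor $2$ disappears, and symmetric and antisymmetric tensors coincide, so both the argument and the shape of the normal form have to be reconsidered.

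\emph{Main obstacle.} Everything above is formal, drawing on the representation theory of Sections \ref{section1}--3 plus the elementary ``generated in degree $2$'' observation, except for the single $4\times4$-Pfaffian expansion in the second paragraph, which is the one genuine computation and the expected sticking point; alternatively one may simply expand all fifteen $4\times4$-Pfaffians of $A$ by hand or by machine — a finite mechanical check that also shows the identities hold over $\ZZ[1/2]$, hence over any field of characteristic different from $2$.
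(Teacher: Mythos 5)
Your proposal is correct, and its skeleton coincides with the paper's: both rest on the Hilbert resolution of Section 3 (so that $I_Y$ is generated by the $5$-dimensional space $W=\ker\Phi$ of quadrics), both identify the fifteen $4\times 4$-Pfaffians with the elements $\pm 2(Q_{ij}-Q_{hk})$, and both conclude because these differences span the sum-zero hyperplane of $W'$, which is $W$. The differences are in execution and are to your credit. First, the paper obtains the matrix by solving the equivariance constraints from scratch (Proposition \ref{invariantmatrix}, pinning down $A_{12,13}$ via the transpositions $(4,5)$ and $(2,3)$ and then propagating by the group action), and then simply asserts that ``a straightforward calculation'' yields all fifteen Pfaffians; you instead take the displayed matrix as given, check equivariance on the two generators, and use the $2$-transitivity of $\mathfrak S_5\cong\mathrm{PGL}_2(\FF_5)$ on the six cosets of $\Aff(1,\ZZ/5)$ --- hence transitivity on the $15$ pairs and, by equivariance of the Pfaffian under signed permutations, on the $15$ principal Pfaffians up to sign --- to reduce the fifteen expansions to a single one (the standard fact that an equivariant map between transitive $G$-sets of equal cardinality is a bijection then gives ``one Pfaffian per pair''). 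Second, you make explicit the step the paper leaves implicit, namely why ``the Pfaffians span $W$'' implies ``the Pfaffians generate $I_Y$'': this needs that $I_Y$ is generated in degree $2$, which you correctly extract from the minimal free resolution $0\to S(-5)\to S(-3)^{\oplus 5}\to S(-2)^{\oplus 5}\to I_Y\to 0$. The only caveat is that your route still hinges on two finite verifications (equivariance of the displayed matrix on the generators, and one honest $4\times 4$-Pfaffian expansion giving a nonzero answer of the form $2(Q_{ij}-Q_{hk})$); these are exactly the computations the paper also relies on, just fifteen-fold, so nothing essential is missing.
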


We first prove the following:

\begin{proposition}\label{invariantmatrix} \
For $\{i,j,k\} = \{1,2,3\}$ we have:
\begin{enumerate}
\item $A_{ij,ik} = s_{ji} +s_{jk}-s_{ki}-s_{kj}$;
\item $A_{ik,kj} = -s_{ij}-s_{ki}$;
\item $A_{ij,ki} = s_{ik}+s_{kj}$;
\item $A_{ik,ki} = s_{ik} +s_{ji}-s_{jk}-s_{ki}$;
\item $A_{ij,kj} = s_{ji}-s_{ij}+s_{kj}-s_{jk}$.
\end{enumerate}
\end{proposition}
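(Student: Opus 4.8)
By the Lemma preceding Theorem~\ref{mainresult}, the space $(\wedge^2 W' \otimes V)^{\mathfrak{S}_5}$ of $\mathfrak{S}_5$-invariant anti-symmetric $6\times 6$ matrices with entries in $V' = V\otimes\e$ (indexed, as in Theorem~\ref{mainresult}, by the basis $Q_{ij}$ of $W'$, and recalling $V'\cong V$) is one-dimensional. So it suffices to prove that the matrix $\hat A$ whose entries are prescribed by the five formulas (1)--(5) — extended by $\hat A_{ij,hk} = -\hat A_{hk,ij}$ and $\hat A_{ij,ij}=0$ — is a nonzero $\mathfrak{S}_5$-invariant tensor; then $\hat A$ spans that one-dimensional space, and comparing, e.g., its entry $\hat A_{12,13} = s_{21}+s_{23}-s_{31}-s_{32}$ with the displayed matrix fixes the normalization, so that $\hat A$ is precisely the matrix $A$. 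Nonvanishing is clear since the $s_{ij}$ form a basis. One first checks that (1)--(5) are mutually consistent: between them they name all $\binom 62 = 15$ unordered index pairs — the three pairs with equal first index (case (1)), the three with equal second index (case (5)), the three ``opposite'' pairs $\{(ik),(ki)\}$ (case (4)), and the six ``path'' pairs, which are named twice, once by case (2) and once by case (3) with the entry written in the opposite order; agreement there is an identity in $V$ to be verified, and it holds.

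Next, $\mathfrak{S}_3$-equivariance. By Lemma~\ref{s3} and Remark~\ref{formulae}, $\mathfrak{S}_3\le\mathfrak{S}_5$ acts by $\tau(Q_{ij}) = \e(\tau)Q_{\tau(i)\tau(j)}$ and on $V' = V\otimes\e$ by the untwisted substitution $s_{ij}\mapsto s_{\tau(i)\tau(j)}$. Hence $\tau$ carries $(Q_{ij}\wedge Q_{hk})\otimes v$ to $(Q_{\tau(i)\tau(j)}\wedge Q_{\tau(h)\tau(k)})\otimes\tau(v)$, the two signs $\e(\tau)$ from the two $Q$'s cancelling. Since each of (1)--(5) is written uniformly in the symbols $i,j,k$ with right-hand side transforming by that same substitution, $\hat A$ is $\mathfrak{S}_3$-invariant by inspection; equivalently, the families (1), (4), (5) are $\mathfrak{S}_3$-orbits of three entries each and the ``path'' family (2)$=$(3) is a single $\mathfrak{S}_3$-orbit of six, and the formulas are $\mathfrak{S}_3$-covariant. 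So $\hat A$ is determined by four representative entries, one per orbit.

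It remains to verify invariance under one further element generating $\mathfrak{S}_5$ together with $\mathfrak{S}_3$; I would take the cycle $\varphi = (1,2,3,4,5)$, since $\langle(1,2),\varphi\rangle = \mathfrak{S}_5$ and $(1,2)\in\mathfrak{S}_3$. Using the explicit action of $\varphi$ on $W'$ from Remark~\ref{formulae} ($Q_{12}\mapsto Q_{31}$, $Q_{13}\mapsto Q_{12}$, $Q_{21}\mapsto Q_{21}$, $Q_{23}\mapsto Q_{13}$, $Q_{31}\mapsto Q_{32}$, $Q_{32}\mapsto Q_{23}$) and the action of $\varphi$ on the $s_{ij}$ computed in the proof of Theorem~\ref{v} (and, $\varphi$ being even, equal on $V$ and on $V'$), one forms $\varphi\big((Q_{ij}\wedge Q_{hk})\otimes\hat A_{ij,hk}\big)$ for each of the fifteen entries, re-orders the resulting wedge of $Q$'s into the chosen lexicographic basis of $\wedge^2 W'$ with its sign, expands $\varphi(\hat A_{ij,hk})$ in the $s_{mn}$, and checks that the coefficient of each basis bivector $Q_{mn}\wedge Q_{pq}$ is exactly $\hat A_{mn,pq}$. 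This is the only genuine computation in the argument, and it is also the step I expect to be the main obstacle — not conceptually, but in the bookkeeping, because $\varphi$ acts on the $s_{ij}$ by a matrix mixing all six basis vectors, so each identity to be checked is a nontrivial linear relation among the six forms $s_{mn}$ and sign slips are easy. Once $\mathfrak{S}_3$- and $\varphi$-invariance are in hand, $\hat A$ lies in the one-dimensional invariant space, is nonzero, and has the stated entries, hence coincides with the matrix $A$ of Theorem~\ref{mainresult}; this is the assertion of Proposition~\ref{invariantmatrix}.

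As an alternative to the direct check one may instead identify $\hat A$ intrinsically with the image of $\mathrm{id}_{V'}\in\Hom(V',V')\cong(V'\otimes V')^{\mathfrak{S}_5}$ under the (unique up to scalar) $\mathfrak{S}_5$-equivariant embedding $V'\hookrightarrow\wedge^2 W'$, composed with a self-duality $V'\cong (V')^{\vee}$; but writing that embedding down explicitly requires a computation of comparable length, so the generator-based verification above seems the most economical route.
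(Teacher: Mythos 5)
Your route is genuinely different from the paper's, and it is sound in outline, but it is not complete as written. You propose to \emph{verify} that the explicitly given matrix is a nonzero element of the one-dimensional space $(\wedge^2 W'\otimes V)^{\mathfrak{S}_5}$: your bookkeeping is correct (the five families exhaust the fifteen unordered pairs, with the path pairs named consistently by (2) and (3); the two sign characters coming from the $Q$'s cancel, so $\mathfrak{S}_3$-covariance of the formulas, with the entries transforming in $V'=V\otimes\e$ by the untwisted substitution, is indeed an inspection; and $\langle(1,2),(1,2,3,4,5)\rangle=\mathfrak{S}_5$). The paper instead \emph{derives} the entries, taking for granted that the invariant tensor exists (from the equivariant Buchsbaum--Eisenbud resolution) and is unique up to scalar: the two elements $(4,5)$ and $(2,3)$ each send $Q_{12}\wedge Q_{13}$ to $-Q_{12}\wedge Q_{13}$, so each forces $A_{12,13}=-\tau A_{12,13}$, and these two linear conditions pin $A_{12,13}$ down to $s_{21}+s_{23}-s_{31}-s_{32}$ up to the overall scalar; the remaining fourteen entries are then obtained by applying $\sigma\in\mathfrak{S}_3$, $(3,4)$ and $(4,5)$ to entries already computed. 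What the derivation buys is that every step is a single application of one group element to one vector of $V'$, and no global consistency check is ever needed; what your verification buys is logical independence from the existence statement, since exhibiting an invariant tensor proves existence rather than assuming it.

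The reservation is that on your route the entire content of the proof sits in the equivariance check for the fifth generator, which you describe accurately but explicitly do not carry out; since that check is all fifteen coefficient identities under a substitution mixing the six basis vectors $s_{mn}$, the argument as submitted has no computation where the computation must be. If you keep your route, note that $(3,4)$ and $(4,5)$ together with $\mathfrak{S}_3$ also generate $\mathfrak{S}_5$, that $(4,5)$ acts on both the $Q_{ij}$ and the $s_{ij}$ by signed permutations of the bases (so that check is immediate), and that the actions of $(3,4)$ on $V$ and on $W'$ are already tabulated in the proof of Theorem \ref{v} and in Remark \ref{formulae}; this confines the real work to the single generator $(3,4)$.
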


\begin{proof}
Observe that $\tau = (4,5)$ sends $Q_{12} \wedge Q_{13}$ to $-Q_{12} \wedge Q_{13}$. Therefore $A_{12,13} = -\tau A_{12,13}$, since $A$ is $\mathfrak{S}_5$ - invariant. Write
$$
A_{12,13} = \alpha_{12}s_{12} + \alpha_{13}s_{13} + \alpha_{21}s_{21} + \alpha_{23}s_{23} + \alpha_{31}s_{31} + \alpha_{32}s_{32},
$$
then we have 
\begin{multline}
A_{12,13} =  \alpha_{12}s_{12} + \alpha_{13}s_{13} + \alpha_{21}s_{21} + \alpha_{23}s_{23} + \alpha_{31}s_{31} + \alpha_{32}s_{32} = \\
-\tau A_{12,13} = \alpha_{12}s_{13}+ \alpha_{13}s_{12} + \alpha_{21}s_{23} + \alpha_{23}s_{21} + \alpha_{31}s_{32} + \alpha_{32}s_{31}.
\end{multline}
This implies
$$
\alpha_{12} = \alpha_{13}, \  \alpha_{21} = \alpha_{23}, \  \alpha_{31} = \alpha_{32}.
$$
On the other hand, if we apply $\tau = (2,3)$, we get:
\begin{multline}
A_{12,13} =  \alpha_{12}s_{12} + \alpha_{13}s_{13} + \alpha_{21}s_{21} + \alpha_{23}s_{23} + \alpha_{31}s_{31} + \alpha_{32}s_{32} = \\
-\tau A_{12,13} = -\alpha_{12}s_{13}- \alpha_{13}s_{12} - \alpha_{21}s_{31} - \alpha_{23}s_{32} - \alpha_{31}s_{21} - \alpha_{32}s_{23},
\end{multline}
hence 
$$
\alpha_{12} = \alpha_{13} = 0, \  \alpha_{21} = \alpha_{23} = -  \alpha_{31} =  - \alpha_{32}.
$$
W.l.o.g. we set $\alpha_{21} = 1$ and we have
\begin{equation}\label{a1213}
A_{12,13} = s_{21} +s_{23}-s_{31}-s_{32}.
\end{equation}

Assume that $\{i,j,k\} = \{1,2,3\}$ and let $\sigma:=\begin{pmatrix} 1&2&3 \\ i&j&k \end{pmatrix}$. Apply $\sigma$ to equation (\ref{a1213}), then we see that
$$
A_{ij,ik} = s_{ji} +s_{jk}-s_{ki}-s_{kj}.
$$
In particular we have: $A_{21,23} = s_{12} +s_{13}-s_{31}-s_{32}$. If we apply $\tau = (3,4)$, we obtain:
\begin{multline}
A_{13,32} =  -\tau A_{21,23} = -\tau (s_{12} +s_{13}-s_{31}-s_{32}) = \sigma_{31} +s_{23}-\sigma_{21} + \sigma_{12} =\\
s_{32}-s_{23}-s_{12} +s_{23}-s_{32}+s_{23}-s_{13}+s_{13}-s_{31}-s_{23} = -s_{12}-s_{31}.
\end{multline}
Applying $\sigma$ we get:
$$
A_{ik,kj} = -s_{ij}-s_{ki}.
$$
For $\tau = (4,5)$ we have $A_{12,31} = \tau A_{13,32} = s_{13}+s_{32}$, and applying $\sigma$ we get:
$$
A_{ij,ki} = s_{ik} + s_{kj}.
$$
We apply $\tau =(3,4)$ to the equation $A_{12,23} = -s_{13}-s_{21}$ and obtain
$$
A_{13,31} = - \tau A_{12,23} = \tau(s_{13}+s_{21}) = -s_{23}+ \sigma_{32} = -s_{23} +s_{13}-s_{31}+s_{21}.
$$
The general equation (4) follows again applying $\sigma:=\begin{pmatrix} 1&2&3 \\ i&j&k \end{pmatrix}$.

Finally we apply $\tau = (4,5)$: $A_{12,32} = \tau A_{13,31} = s_{21}-s_{12}+s_{32}-s_{23}$ and applying  $\sigma:=\begin{pmatrix} 1&2&3 \\ i&j&k \end{pmatrix}$ yields
$$
A_{ij,kj} = s_{ji}-s_{ij}+s_{kj}-s_{jk}.
$$
\end{proof}

\begin{proof} [Proof of Theorem \ref{mainresult}]
The matrix follows immediately from Proposition \ref{invariantmatrix}. A straightforward calculation shows that the 15 $(4 \times 4)$ - Pfaffians of the matrix $A$ are:
$$
P_1= 2Q_{12}-2Q_{13}, \ P_2= 2Q_{12}-2Q_{32}, \ P_3= 2Q_{21}-2Q_{12}, \ P_4= 2Q_{13}-2Q_{21}, 
$$
$$
P_5= 2Q_{23}-2Q_{13}, \ P_6= 2Q_{32}-2Q_{23}, \ P_7= 2Q_{21}-2Q_{32}, \ P_8= 2Q_{31}-2Q_{32}, 
$$
$$
P_9= 2Q_{31}-2Q_{21}, \ P_{10}= 2Q_{13}-2Q_{31}, \ P_{11}= 2Q_{31}-2Q_{12}, \ P_{12}= 2Q_{23}-2Q_{12}, 
$$
$$
P_{13}= 2Q_{21}-2Q_{23}, \ P_{14}= 2Q_{32}-2Q_{13}, \ P_{15}= 2Q_{12}-2Q_{23}.
$$
This proves the theorem.
\end{proof}

\section{$\mathfrak{S}_5$-invariant equations of  $Y \subset (\PP^1)^5$}

As already mentioned in the first section there are  five geometric objects permuted by the automorphism group $\mathfrak S_5$ of the del Pezzo surface $Y$ of degree five: namely, $5$ fibrations $\varphi_i : Y \ra \PP^1$,
induced, for $1 \leq i \leq 4$, by the projection with centre $p_i$, and, for $i=5$, by the pencil of conics through
the $4$ points (viewing the Del Pezzo surface as the moduli space of five ordered points on the projective line, these fibrations are just the maps to the moduli space of four ordered points on the projective line obtained forgetting the $j$-th of the five points). Each fibration  is a conic bundle, with exactly three singular fibres, correponding to the possible partitions of
type $(2,2)$ of the set $\{1, 2,3,4,5\} \setminus \{i\}$.

We have proven in \cite{debart} the following result

\begin{theorem}
 $Y$ embeds into $(\PP^1)^4$ via $\varphi_1 \times \dots \times  \varphi_4$
 and in $(\PP^1)^5$ via $\varphi_1 \times \dots \times \varphi_5$.
 \end{theorem}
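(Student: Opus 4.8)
The plan is to prove that $f := \varphi_1\times\varphi_2\times\varphi_3\times\varphi_4\colon Y\to(\PP^1)^4$ is a closed embedding; the statement for $(\PP^1)^5$ then follows formally, since $\varphi_1\times\cdots\times\varphi_5$ factors as $Y\xrightarrow{(f,\varphi_5)}(\PP^1)^4\times\PP^1$ and a morphism whose composition with a projection is a closed embedding is itself a closed embedding. As $Y$ is projective, it suffices to check that $f$ is injective on closed points and that $df_z$ is injective for every $z\in Y$ (then $f$ is a proper, unramified monomorphism, hence a closed immersion, and $Y\cong f(Y)$). I work throughout in the model $\pi\colon Y\to\PP^2$, the blow-up of $\PP^2$ at the four general points $p_1,\dots,p_4$, using from Section \ref{section1} that $\varphi_i$ ($1\le i\le4$) is induced by projection from $p_i$, that $\varphi_5$ comes from the pencil of conics through $p_1,\dots,p_4$, that the reducible fibres of $\varphi_m$ are exactly the divisors $E_{a,b}+E_{c,d}$ with $\{a,b\}\cap\{c,d\}=\emptyset$ and $\{a,b,c,d\}=\{1,\dots,5\}\setminus\{m\}$, and the intersection pattern of the ten lines encoded in the Petersen graph (in particular, each of the $15$ nodes lies on exactly two lines).

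\emph{Injectivity on points.} Suppose $f(x)=f(y)$. If neither point lies on an exceptional curve $E_{i,5}$, set $\bar x=\pi(x)$, $\bar y=\pi(y)$; if $\bar x\ne\bar y$, then $\varphi_i(x)=\varphi_i(y)$ for $i=1,2,3$ forces $p_1,p_2,p_3$ onto the line $\overline{\bar x\bar y}$, against general position, so $x=y$. If $x\in E_{i,5}$, then for each of the three $j\le4$ with $j\ne i$ the curve $E_{i,5}$ is contracted by $\varphi_j$ to the point corresponding to the line $\overline{p_ip_j}$; requiring $\varphi_j(y)$ to agree for all such $j$ yields either a forbidden collinearity among the $p$'s or $\pi(y)=p_i$, i.e. $y\in E_{i,5}$, and then $x=y$ because $\varphi_i$ restricts to an isomorphism $E_{i,5}\xrightarrow{\sim}\PP^1$ (a fibre of $\varphi_i$ meets $E_{i,5}$ in one point). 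The case $x\in E_{i,5}$, $y\in E_{i',5}$ with $i\ne i'$ is excluded using any $\varphi_k$ with $k\le4$, $k\notin\{i,i'\}$.

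\emph{Injectivity of $df_z$.} One has $\ker(df_z)=\bigcap_{i=1}^4\ker(d\varphi_{i,z})$, and $\ker(d\varphi_{i,z})$ is the tangent line to the fibre of $\varphi_i$ through $z$ whenever that fibre is smooth at $z$ (i.e. unless $z$ is the node of a reducible fibre of $\varphi_i$); so it suffices to exhibit, for each $z$, two indices among $1,\dots,4$ whose fibres through $z$ are smooth at $z$ with distinct tangent lines. If $z$ lies on none of the ten lines this is clear: near $z$, $\pi$ is an isomorphism and $\ker(d\varphi_{i,z})$ is the tangent to the line $\overline{p_i\pi(z)}$, and these four lines through $\pi(z)$ are distinct. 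If $z$ lies on $E_{i,5}$ but is not a node, then $\ker(d\varphi_{i,z})$ is transverse to $E_{i,5}$ while $\ker(d\varphi_{j,z})=T_zE_{i,5}$ for the three $j\ne i$ in $\{1,\dots,4\}$; and if $z$ lies on one of the six strict transforms of lines, $L$, but is not a node, then two of the $\ker(d\varphi_{i,z})$ equal $T_zL$ while the other two are transverse to $L$.

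The remaining, and most delicate, points are the $15$ nodes, where the differential of exactly one $\varphi_m$ vanishes identically; here the Petersen-graph combinatorics makes the check uniform. Write the node as $z=E_{a,b}\cap E_{c,d}$ with $\{a,b\}\cap\{c,d\}=\emptyset$; then $E_{a,b}+E_{c,d}$ is a reducible fibre of $\varphi_m$ for the unique $m\notin\{a,b,c,d\}$, so $d\varphi_{m,z}=0$, while for every other index $j$ the point $z$ is a smooth point of the fibre of $\varphi_j$ through it, and from the description of reducible fibres and the intersection rule for the $E$'s one reads off $\ker(d\varphi_{j,z})=T_zE_{c,d}$ for $j\in\{a,b\}$ and $\ker(d\varphi_{j,z})=T_zE_{a,b}$ for $j\in\{c,d\}$. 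Among the usable indices $1,\dots,4$ there is always one lying in $\{a,b\}$ and one lying in $\{c,d\}$: if $m=5$ all four are usable, and if $m\le4$ the only non-usable index of $\{a,b,c,d\}$ is $5$, which lies in just one of the two pairs. Since $E_{a,b}$ and $E_{c,d}$ meet transversally at $z$, the kernels $T_zE_{a,b}$ and $T_zE_{c,d}$ intersect in $0$, so $df_z$ is injective. Together with injectivity on points this shows $f$ is a closed embedding, and the $(\PP^1)^5$ statement follows. The one genuine obstacle is precisely this node analysis: verifying that $\varphi_1,\dots,\varphi_4$ alone — without the fifth fibration $\varphi_5$ — already separate both branches at every one of the fifteen nodes.
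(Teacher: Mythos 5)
Your argument is correct, but note that the paper itself gives no proof of this statement: it is quoted verbatim from the authors' earlier work \cite{debart}, where the embedding is obtained by exhibiting explicit multihomogeneous equations for the image in $(\PP^1)^4$ (the $3\times 3$ minors of the Hilbert--Burch matrix of Theorem \ref{eqdp}) and working with the resulting resolution. Your route is genuinely different and entirely synthetic: you verify the standard criterion (proper, injective on points, injective on tangent spaces) directly from the conic-bundle structure, using only the list of reducible fibres of the $\varphi_m$ and the incidence combinatorics of the ten lines. The checks are all sound: the point-separation reduces to the non-collinearity of the $p_i$; away from the fifteen nodes at least two of the four fibre directions are transverse; and at a node $z=E_{a,b}\cap E_{c,d}$ your identification $\ker(d\varphi_{j,z})=T_zE_{c,d}$ for $j\in\{a,b\}$ is justified because the reducible fibre of $\varphi_a$ through $z$ is $E_{c,d}+E_{b,m}$, whose node lies on $E_{b,m}$ and hence (as $E_{a,b}\cdot E_{b,m}=0$) is not $z$; the counting showing that the excluded index $5$ can remove at most one of the two pairs $\{a,b\}$, $\{c,d\}$ from use is exactly the point that makes $\varphi_1,\dots,\varphi_4$ suffice. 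What your approach buys is a coordinate-free proof that isolates the only delicate locus (the nodes) and explains conceptually why four of the five pencils already embed $Y$; what the paper's (cited) approach buys is the explicit equations and the multigraded resolution, which are what the rest of Section 4 actually needs.
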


Moreover, we showed \footnote{In the previous paper the coordinates which are here denoted $(t'_1,t'_2)$ were denoted $(t_1,t_2)$; whereas here
we reserve the notation $(t_1,t_2)$ for $(t'_1, - t'_2)$ in order to show the $\mathfrak{S}_5$-symmetry,}
 \begin{theorem}\label{eqdp}
 Let $\Sigma \subset (\PP^1)^4=:Q$, with coordinates 
 $$(v_1:v_2),(w_1:w_2),(z_1:z_2), (t'_1:t'_2),$$
  be the image of the Del Pezzo surface $Y$ via  $\varphi_3 \times \varphi_1 \times \varphi_2 \times   \varphi_4$.
 Then the equations of $\Sigma$ are given by the four $3 \times 3$-minors of the following Hilbert-Burch matrix:
 \begin{equation}
 A:=
 \begin{pmatrix}
 t'_2 & -t'_1 &t'_1+t'_2\\
  v_1 & v_2 & 0\\
 w_2 & 0 &w_1 \\
 0&-z_1&z_2 
 \end{pmatrix} .
\end{equation}
In particular, we have a Hilbert-Burch resolution:
\begin{equation}
0 \ra (\hol_Q(-\sum_{i=1}^4 H_i))^{\oplus 3} \ra \bigoplus_{j=1}^4( \hol_Q(-\sum_{i=1}^4 H_i + H_j)) \ra \hol_Q \ra \hol_{\Sigma} \ra 0,
\end{equation}
where $H_i$ is the pullback to $Q$ of a point in $\PP^1$ under the i-th projection.
 \end{theorem}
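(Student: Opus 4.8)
\emph{Proof proposal.} The plan is to read the statement as a Hilbert--Burch presentation. By the preceding theorem the image $\Sigma$ of $Y$ under $\varphi_3\times\varphi_1\times\varphi_2\times\varphi_4$ is isomorphic to $Y$, so $\Sigma$ is a smooth irreducible surface in $Q=(\PP^1)^4$; it then suffices to prove that the ideal $I$ generated by the four $3\times3$ minors of $A$ is the ideal of $\Sigma$, and that $A$ together with these minors yields the displayed resolution of $\hol_\Sigma$.

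\emph{Step 1: $\Sigma\subset V(I)$.} On the blow--up of $\PP^2$ in $p_1,\dots,p_4$ the maps $\varphi_1,\varphi_2,\varphi_3$ are the linear projections from $p_1,p_2,p_3$ and $\varphi_4$ the projection from $p_4=(1:1:1)$; choosing the bases of the four pencils suitably one may take $\varphi_3=(x_1:x_2)$, $\varphi_1=(x_2:x_3)$, $\varphi_2=(x_3:x_1)$, $\varphi_4=(x_2-x_3:x_3-x_1)$, the sign in the last one being the choice that yields the coordinates $(t'_1:t'_2)$ of the footnote. Substituting these rational functions into the four minors of $A$ and cancelling a common monomial, one checks that each minor vanishes identically on $\Sigma$. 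This is the only genuinely computational point, and fixing the four identifications $\PP^1\cong\PP^1$ together with the sign of $\varphi_4$ is exactly what pins down the displayed form of $A$.

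\emph{Step 2: $\operatorname{codim}_Q V(I)=2$.} The ideal of maximal minors of a $4\times3$ matrix always has codimension $\le 4-3+1=2$, while $\Sigma\subset V(I)$ gives $\dim V(I)\ge 2$. For the reverse inequality, note that a common factor of the four minors (which would exist were some component of $V(I)$ of codimension one, the homogeneous coordinate ring of $Q$ being a polynomial ring, hence a UFD) would in particular divide the minor of multidegree $(0,1,1,1)$, hence be free of the variables $v_1,v_2$, and would also divide $v_1w_1z_1-v_2w_2z_2$; comparing the coefficients of $v_1$ and of $v_2$, such a factor would then divide both $w_1z_1$ and $w_2z_2$, hence be a unit. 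So $\operatorname{codim}_Q V(I)=2$, equal to the expected value.

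\emph{Step 3: Hilbert--Burch and identification.} Because the maximal minors of the $4\times3$ matrix $A$ cut out a subscheme of the expected codimension $2$, the Hilbert--Burch theorem (equivalently, the Buchsbaum--Eisenbud acyclicity criterion, applied on the standard affine charts of $Q$) shows that
$$0\to\bigl(\hol_Q(-\sum_{i=1}^{4}H_i)\bigr)^{\oplus3}\xrightarrow{\ A\ }\bigoplus_{j=1}^{4}\hol_Q\bigl(-\sum_{i=1}^{4}H_i+H_j\bigr)\to\hol_Q\to\hol_{V(I)}\to 0$$
is exact and that $\hol_{V(I)}$ is Cohen--Macaulay of pure codimension $2$. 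Computing the class of $\hol_{V(I)}$ in $A^\ast(Q)=\ZZ[H_1,\dots,H_4]/(H_i^2)$ from this resolution, the contributions in codimensions $0$ and $1$ cancel and the codimension--$2$ part equals $\sum_{i<j}H_iH_j$; thus $V(I)$ has multidegree $(1,1,1,1,1,1)$. On the other hand the fibre of $\varphi_i$ on $Y$ has class $\ell-E_i$ ($\ell$ the line class, $E_i$ the exceptional divisor over $p_i$), and $(\ell-E_i)\cdot(\ell-E_j)=1$ for $i\neq j$, so $\Sigma\hookrightarrow Q$ has multidegree $(1,1,1,1,1,1)$ as well. Writing $[V(I)]=\sum_Z m_Z[Z]$ over the two--dimensional components $Z$ of $V(I)$, where $m_Z\ge1$ is the length of $\hol_{V(I)}$ at the generic point of $Z$, and using that each $[Z]$ is an effective class with non--negative coordinates while $[\Sigma]$ already contributes $1$ to every coordinate of $[V(I)]$, the equality $[V(I)]=[\Sigma]$ forces $\Sigma$ to be the unique component, with $m_\Sigma=1$. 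Hence $V(I)$ is generically reduced; being Cohen--Macaulay it has no embedded components, so it is reduced and $V(I)=\Sigma$. Therefore $I$ is the ideal of $\Sigma$, and the displayed complex is the asserted resolution. The main obstacle is the bookkeeping of Step 1 --- normalising the four rulings and the one sign so that the minors vanish --- together with ensuring in Step 3 that $V(I)$ is reduced rather than merely supported on $\Sigma$; the latter is forced by the multidegree comparison once Hilbert--Burch is in hand.
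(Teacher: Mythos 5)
Your argument is correct and essentially complete, but note that this paper itself offers no proof of Theorem \ref{eqdp} --- the result is imported from the earlier paper \cite{debart} --- so there is no in-paper argument to compare yours against; what you have written is a self-contained verification. All three steps hold up. In Step 1, with $\varphi_3=(x_1:x_2)$, $\varphi_1=(x_2:x_3)$, $\varphi_2=(x_3:x_1)$ and $(t'_1:t'_2)=(x_3-x_2:x_1-x_3)$ (projectively the same as your choice, and consistent with the paper's later identification $(t_1,t_2,t_3)=(y_1,-y_2,y_3)$ together with the footnote $(t'_1,t'_2)=(t_1,-t_2)$), the minor on rows $2,3,4$ becomes $x_1x_2x_3-x_2x_3x_1=0$, and each of the other three minors becomes a single variable times $(x_1-x_3)x_2+(x_3-x_2)x_1-(x_1-x_2)x_3=0$, so $\Sigma\subset V(I)$ is genuine. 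Step 2's common-factor argument in the Cox ring correctly rules out a divisorial component, giving $\operatorname{codim}V(I)=2$. In Step 3 the Chern-character computation does give $[V(I)]=\sum_{i<j}H_iH_j$ (the codimension $0$ and $1$ terms cancel as $1-4+3$ and $3\sum H_i-3\sum H_i$), matching $[\Sigma]$ since $(\ell-E_i)\cdot(\ell-E_j)=1$; together with purity and the absence of embedded components coming from Cohen--Macaulayness, this forces $V(I)=\Sigma$ scheme-theoretically. The one point you should make explicit is why no further two-dimensional component can hide with class zero: any irreducible surface $Z\subset(\PP^1)^4$ satisfies $[Z]\cdot(\sum_i H_i)^2>0$, hence has some coordinate $\ge 1$ in the basis $H_iH_j$, which is exactly what makes the multidegree comparison conclusive.
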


Observe first that each pencil $\varphi_i \colon Y \ra \PP^1$ can be rewritten as
 $$
 \varphi_i \colon Y \ra \Lambda_i \subset \PP^2,
 $$
 where $\Lambda_1, \Lambda_2, \Lambda_3, \Lambda_4, \Lambda_5$ are the lines in $\PP^2$ defined by
 equations which are consequences of  the following equalities: 
  \begin{eqnarray*}
 y_1:= x_3-x_2, \\ 
  y_2:=x_1-x_3, \\
  y_3:= x_2-x_1, \\  
y_1+y_2+y_3 = 0, \\
  x_1y_1 + x_2y_2 +x_3y_3 = 0.
 \end{eqnarray*}
 
 We set
 \begin{eqnarray*}
 \Lambda_1: =& \{ w_1-w_2+w_3 = 0\}, \\ 
 \Lambda_2: =& \{  z_1-z_2+z_3 = 0 \}, \\
 \Lambda_3: = & \{ v_1-v_2+v_3 = 0\}, \\ 
 \Lambda_4: = & \{   t_1-t_2+t_3  = 0\}, \\
 \Lambda_5 := &  \{   s_1-s_2+s_3  = 0\},
 \end{eqnarray*}
 
 and then the map  
 $$
 \varphi_3 \times \varphi_1 \times \varphi_2 \times \varphi_4 \times \varphi_5 \colon Y \ra \Lambda_3 \times \Lambda_1 \times \Lambda_2 \times \Lambda_4 \times \Lambda_5
 $$ is  expressed by: 
 \begin{eqnarray*}
(v_1,v_2,v_3):=(x_1,x_2,y_3),  & (w_1,w_2,w_3):=(x_2,x_3,y_1),\\  
(z_1,z_2,z_3):=(x_3,x_1,y_2),  & (t_1,t_2,t_3):=(y_1,-y_2,y_3), 
 \end{eqnarray*}
 $$
 (s_1,s_2,s_3):= (x_1y_1:-x_2y_2:x_3y_3). 
$$
 The equation of the image  of the Del Pezzo surface $Y$ in $(\PP^1)^3 =  \Lambda_3 \times \Lambda_1 \times \Lambda_2$ under the map map  $\varphi_3 \times \varphi_1 \times \varphi_2$ is then
 $$v_1w_1z_1 - v_2w_2z_2 = 0.
 $$

We give now the action of $\mathfrak{S}_5$ on the pencils $\varphi_i$ in order to determine the 10 ($\mathfrak{S}_5$-invariant) equations of the image of $Y$ under the map $\varphi_3 \times \varphi_1 \times \varphi_2 \times \varphi_4 \times \varphi_5$, which correspond to the 10 possible projections of $(\PP^1)^5$ to $(\PP^1)^3$.

In fact, $\mathfrak{S}_5$ acts by permuting the indices of $\varphi_i$, but a permutation $\tau \in \mathfrak{S}_5$ maps $\varphi_i$ to $\lambda_{\tau}  \circ \varphi_{\tau(i)}$, where $\lambda_{\tau}$ is a projectivity of $\PP^1$.

A straightforward computation (using the formulae for the action of $\mathfrak{S}_5$ on $\PP^2$ by birational maps given in section \ref{section1}) gives the following table.

\begin{tabular}{|c||c|c|c|c|}
\hline
\hline
$\tau$  & $(1,2)$ & $(2,3)$ & $(3,4)$ & $(4,5)$ \\
\hline
\hline 
$\tau(v_1:v_2:v_3)$ & $(v_2:v_1:-v_3)$  & $(z_2:z_1:-z_3)$& $(t_2:t_1:-t_3)$ & $(v_2:v_1:-v_3)$ \\
\hline
$\tau(w_1:w_2:w_3)$ & $(z_2:z_1:-z_3)$  & $(w_2:w_1:-w_3)$& $(w_3:w_2:w_1)$ &  $(w_2:w_1:-w_3)$ \\
\hline
$\tau(z_1:z_2:z_3)$ & $(w_2:w_1:-w_3)$  & $(v_2:v_1:-v_3)$& $(-z_1:z_3:z_2)$ &  $(z_2:z_1:-z_3)$ \\
\hline
$\tau(t_1:t_2:t_3)$ & $(t_2:t_1:-t_3)$  & $(-t_1:t_3:t_2)$& $(v_2:v_1:-v_3)$ &  $(s_1:s_2:s_3)$ \\
\hline
$\tau(s_1:s_2:s_3)$ & $(s_2:s_1:-s_3)$  & $(-s_1:s_3:s_2)$& $(s_2:s_1:-s_3)$ &  $(t_1:t_2:t_3)$ \\
\hline 
\hline
\end{tabular}

\begin{theorem}\label{product}
 Let $\Sigma \subset (\PP^1)^5=\Lambda_3 \times \Lambda_1 \times \Lambda_2 \times \Lambda_4 \times \Lambda_5 \subset (\PP^2)^5$, with coordinates 
 $$(v_1:v_2:v_3),(w_1:w_2:w_3),(z_1:z_2:z_3), (t_1:t_2:t_3), (s_1:s_2:s_3)$$
  be the image of the Del Pezzo surface $Y$ under the map  $\varphi_3 \times \varphi_1 \times\varphi_2 \times\varphi_4  \times  \varphi_5$.
 Then the equations of $\Sigma$ are the following:
 \begin{eqnarray*}
1) & v_1w_1z_1 - v_2w_2z_2 = 0, \\  
2) & v_3w_1t_1 - v_2w_3t_3 = 0, \\  
3) & v_1z_3t_3+v_3z_2t_2 = 0, \\
4) &w_3z_1t_2+w_2z_3t_1= 0, \\  
5) &t_1v_1s_2-t_2v_2s_1=0, \\
6) &t_1z_2s_3-t_3z_1s_1=0, \\
7) & t_3w_2s_2-t_2w_1s_3=0, \\
8) & v_3w_2s_1-v_1w_3s_3=0, \\
9) & v_2z_3s_3+v_3z_1s_2=0, \\
10) & w_3z_2s_2+w_1z_3s_1=0.
 \end{eqnarray*}

\end{theorem}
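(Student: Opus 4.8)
The plan is to exploit the $\mathfrak{S}_5$-equivariance of the map $F:=\varphi_3\times\varphi_1\times\varphi_2\times\varphi_4\times\varphi_5$. By the embedding theorem recalled above, $F$ is a closed embedding, so $\Sigma$ is abstractly isomorphic to $Y$; moreover $F$ intertwines the action of $\mathfrak{S}_5=\Aut(Y)$ on $Y$ with the action on $(\PP^1)^5=\Lambda_3\times\Lambda_1\times\Lambda_2\times\Lambda_4\times\Lambda_5$ described in the table just above, namely $\tau$ permutes the five factors according to $\tau$ and twists each by the indicated projectivity. Hence the multigraded ideal $I(\Sigma)\subset\CC[(\PP^1)^5]$ is $\mathfrak{S}_5$-stable. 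The starting point is equation $1)$: the image of $Y$ in $\Lambda_3\times\Lambda_1\times\Lambda_2$ was computed above to be $\{v_1w_1z_1-v_2w_2z_2=0\}$. One may also read this, together with $2),3),4)$, directly off Theorem \ref{eqdp}: a one-line expansion of each determinant shows that, after the dictionary $v_3=v_2-v_1$, $w_3=w_2-w_1$, $z_3=z_2-z_1$, $t_3=t_2-t_1$, $t'_1=t_1$, $t'_2=-t_2$, the four $3\times3$-minors of the Hilbert--Burch matrix are, up to sign, exactly equations $1)$--$4)$.

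\textbf{Step 1 (generating the list).} Each of the ten projections $(\PP^1)^5\to(\PP^1)^3$ onto three of the factors restricts on $\Sigma\cong Y$ to a morphism birational onto its image (it contracts a few $(-1)$-curves), so the image is an irreducible hypersurface of multidegree $(1,1,1)$; it is in fact the unique such form vanishing on $\Sigma$, since in the relevant multidegree the coordinate ring of $(\PP^1)^5$ has dimension $8$ and that of $\Sigma$ has dimension $7$ by Riemann--Roch on $Y$. Since $\mathfrak{S}_5$ acts transitively on the ten unordered triples of $\{1,2,3,4,5\}$, and the adjacent transpositions $(1,2),(2,3),(3,4),(4,5)$ appearing in the table generate $\mathfrak{S}_5$, applying suitable products of these to equation $1)$ produces, up to scalars, the equation for every triple; carrying out the nine substitutions is the announced straightforward computation and returns precisely equations $1)$--$10)$ (for instance $(4,5)$ fixes $1)$ up to sign and $(3,4)$ sends $1)$ to $\pm 4)$). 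In particular all ten forms lie in $I(\Sigma)$, and they are linearly independent since they live in distinct multidegrees.

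\textbf{Step 2 (they suffice).} Since $\varphi_3\times\varphi_1\times\varphi_2\times\varphi_4$ already embeds $Y$, Theorem \ref{eqdp} says that equations $1)$--$4)$, which involve only $v,w,z,t$, generate the ideal of $\Sigma_4\times\PP^1_s$, where $\Sigma_4\subset(\PP^1)^4$ is the image in the first four factors and $\Sigma_4\cong Y$. Thus it remains to show that adjoining $5)$--$10)$ cuts $Y\times\PP^1_s$ down to the graph $\Gamma$ of $\varphi_5\colon Y\to\PP^1$. Writing $\varphi_5=[\alpha:\beta]$ with $\alpha,\beta\in H^0(Y,\varphi_5^{\ast}\hol_{\PP^1}(1))$, the graph $\Gamma$ is the zero locus on $Y\times\PP^1_s$ of the single bilinear form $\ell_5:=s_1\beta-s_2\alpha$, and the restriction to $Y\times\PP^1_s$ of the equation attached to the triple $\{a,b,5\}$ factors, by Künneth, as $f_{ab}\cdot\ell_5$ with $f_{ab}\in H^0\bigl(Y,\hol_Y(\varphi_a^{\ast}\hol_{\PP^1}(1)\otimes\varphi_b^{\ast}\hol_{\PP^1}(1)\otimes\varphi_5^{\ast}\hol_{\PP^1}(-1))\bigr)$. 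A computation in $\Pic(Y)$ identifies the underlying divisor class with $E_{c,5}+E_{d,5}$, where $\{c,d\}=\{1,2,3,4\}\setminus\{a,b\}$ and $E_{c,5}$ is the exceptional curve over $p_c$; since this class is effective with $h^0=1$, $f_{ab}$ is, up to a constant, the equation of $E_{c,5}\cup E_{d,5}$. As $\{a,b\}$ ranges over the six pairs in $\{1,2,3,4\}$ these six divisors have empty common intersection, so the map $\bigoplus_{a,b}\hol_Y(-E_{c,5}-E_{d,5})\to\hol_Y$ sending the $(a,b)$-summand to $f_{ab}$ is a surjection of sheaves; hence equations $5)$--$10)$ restricted to $Y\times\PP^1_s$ generate $(\ell_5)$, the ideal of $\Gamma$. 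Therefore equations $1)$--$10)$ cut out $\Sigma$.

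\textbf{Main obstacle.} The delicate point is the very last inference of Step 2. Surjectivity of the displayed map of sheaves shows that the ten forms define $\Sigma$ set-theoretically and even cut it out scheme-theoretically; but to conclude that they generate the \emph{saturated multigraded ideal} $I(\Sigma)$ one must rule out generators of $I(\Sigma)$ in lower multidegrees, which amounts to the vanishing $H^1(Y,\hol_Y(E_{c,5}+E_{d,5}))=0$ together with the analogous vanishings along the Koszul-type complex on $Y\times\PP^1_s$. The cleaner alternative is to compute the complete multigraded Hilbert function of $\Sigma$ from Riemann--Roch on $Y$ and match it against the minimal free resolution, which, as pointed out in the Introduction, is formally identical to the one obtained in \cite{debart} for the quartic del Pezzo surface inside $(\PP^1)^5$; this simultaneously proves that the listed equations generate $I(\Sigma)$ and accounts for the coincidence of Hilbert schemes mentioned there.
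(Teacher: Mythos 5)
Your Step 1 is precisely the paper's proof: the authors' entire argument for Theorem \ref{product} is the observation that equations $2)$--$10)$ are obtained from $1)$ by applying the substitutions of the table along a chain of adjacent transpositions, which is exactly what you do (and your spot checks, e.g.\ that $(4,5)$ fixes $1)$ up to sign and $(3,4)$ sends $1)$ to $-4)$, agree with their diagram). Where you diverge is Step 2: the paper does not prove, inside this proof, that the ten trilinear forms generate (or even cut out) the ideal of $\Sigma$; that claim is delegated to the earlier paper \cite{debart} and only recorded afterwards in Remark \ref{p15} (``we have seen that the equations of $Y'$ \dots are the ten equations obtained by the ten coordinate projections''). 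Your supplementary argument is essentially sound as far as it goes: the factorization $f_{ab}\cdot\ell_5$ is legitimate, since a section of $(L_a\otimes L_b)\boxtimes\hol_{\PP^1}(1)$ vanishing on the graph $\Gamma$ lies in $H^0$ of its twist by $\hol(-\Gamma)$, and indeed $L_a+L_b-L_5=E_{c,5}+E_{d,5}$ in $\Pic(Y)$ with $h^0=1$; since the six divisors $E_{c,5}+E_{d,5}$ have empty common intersection (any common point would have to lie on $E_{c,5}\cup E_{d,5}$ for every pair $\{c,d\}\subset\{1,2,3,4\}$, and the $E_{i,5}$ are pairwise disjoint), equations $5)$--$10)$ generate the ideal sheaf of $\Gamma$ in $Y\times\PP^1_s$, which together with Theorem \ref{eqdp} gives scheme-theoretic cutting out. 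You correctly identify that upgrading this to generation of the saturated multigraded ideal needs the extra cohomological input (or the Hilbert-function/resolution comparison) you describe; the paper does not supply that step here either. In short: for what the paper actually proves, your route is the same; your Step 2 is a reasonable, honestly flagged attempt at the sufficiency statement that the paper outsources to \cite{debart}.
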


\begin{proof}
The equations $2) -10)$ are obtained from the first one using the above table described in the following diagram:

\begin{equation*}
\xymatrix{
1) \ar[r]^{(3,4)} &  4) \ar[r]^{(4,5)} &10) \\
4) \ar[r]^{(2,3)} &2) \ar[r]^{(1,2)}& 3) \ar[r]^{(4,5)}& 9) \ar[r]^{(3,4)} &6) \ar[r]^{(2,3)}& 5),\\
10) \ar[r]^{(2,3)} &  8) \ar[r]^{(3,4)}& 7) , &&& \\
}
\end{equation*}
\end{proof}

 \begin{remark}\label{p15}
 We have seen that the equations of 
 $$
 Y':=(\varphi_1 \times \ldots \times \varphi_5)(Y) \subset (\PP^1)^5
 $$
 are the ten equations obtained by the ten coordinate projections $P:=(\PP^1)^5 \ra (\PP^1)^3$ and therefore we have an exact sequence 
 \begin{multline}
 0 \ra (\hol_P(-\sum_{i=1}^5 H_i))^{\oplus 6} \ra (\bigoplus_{j=1}^5\hol_P(-\sum_{i=1}^5 H_i + H_j)) ^{\oplus 3} \ra \\
 \ra \bigoplus_{h<k}( \hol_P(-\sum_{i=1}^5 H_i + H_k +H_h)) \ra \hol_P \ra \hol_{Y'} \ra 0,
\end{multline}
where the first syzygies are the pull-backs of the syzygies obtained for each projection $(\PP^1)^5 \ra (\PP^1)^4$. 

Observe that the shape of this resolution is the same as the Eagon-Northcott complex for a Del Pezzo surface $S_4$ of degree 4, cf \cite{debart}.

But if this resolution were associated to a $5 \times 3$ matrix of the same type, then we would get a Del Pezzo surface of degree 4
and not of degree 5. 

Hence (since $K^2$ is invariant for smooth deformations) we have a Hilbert scheme which is reducible (since the open set  corresponding to
smooth surfaces is disconnected).
\end{remark}

\begin{bibdiv}
\begin{biblist}

\bib{bhh}{book}{
   author={Barthel, Gottfried},
   author={Hirzebruch, Friedrich},
   author={H{\"o}fer, Thomas},
   title={Geradenkonfigurationen und Algebraische Fl\"achen},
   language={German},
   series={Aspects of Mathematics, D4},
   publisher={Friedr. Vieweg \& Sohn, Braunschweig},
   date={1987},
   pages={xii+308},
   isbn={3-528-08907-5},
   review={\MR{912097}},
   doi={10.1007/978-3-322-92886-3},
}

\bib{rigid}{article}{
   author={Bauer, Ingrid}
   author={Catanese, Fabrizio},
    title={On rigid compact complex surfaces and manifolds},
   journal={Adv. Math.},
   volume={333},
   date={2018},
   pages={620-669},
   issn={},
   review={\MR{3818088}},}

 \bib{debart}{article}{
   author={Bauer, Ingrid}
   author={Catanese, Fabrizio},
    title={Del Pezzo surfaces, rigid line configurations and Hirzebruch-Kummer coverings},
   journal={Boll. Unione Mat. Ital.},
   date={2018},
   pages={1-20}
   DOI= {10.1007/s40574-018-0169-x}
}

\bib{exactcomplex}{article}{
   author={Buchsbaum, David A.},
   author={Eisenbud, David},
   title={What makes a complex exact?},
   journal={J. Algebra},
   volume={25},
   date={1973},
   pages={259--268},
   issn={0021-8693},
   review={\MR{0314819}},
}

\bib{be}{article}{
   author={Buchsbaum, David A.},
   author={Eisenbud, David},
   title={Algebra structures for finite free resolutions, and some structure theorems for ideals of codimension 3},
   journal={Amer J. Math.},
   volume={99},
   date={1977},
   pages={447-485},
   issn={},
   review={\MR{453723}},
}

\bib{takagi1}{article}{
   author={Catanese, Fabrizio},
   title={Kodaira fibrations and beyond: methods for moduli theory, first version},
   journal={The Sixteenth Takagi Lectures, The Math. Soc. of Japan and
GSMS Univ. Tokyo.
 },
   volume={16},
   date={2015},
   number={},
   pages={37-102},
   issn={},
   review={},
}

\bib{takagi}{article}{
   author={Catanese, Fabrizio},
   title={Kodaira fibrations and beyond: methods for moduli theory},
   journal={Jpn. J. Math.},
   volume={12},
   date={2017},
   number={2},
   pages={91--174},
   issn={0289-2316},
   review={\MR{3694930}},
}

\bib{cheltsov}{book}{
   author={Cheltsov, Ivan},
   author={Shramov, Constantin},
   title={Cremona groups and the icosahedron},
   series={Monographs and Research Notes in Mathematics},
   publisher={CRC Press, Boca Raton, FL},
   date={2016},
   pages={xxi+504},
   isbn={978-1-4822-5159-3},
   review={\MR{3444095}},
}
	
\bib{conforto}{book}{
   author={Conforto, Fabio},
   title={Le superficie razionali},
  publisher={Zanichelli, Bologna},
   volume={},
   date={1939},
   pages={XV + 554 pp.},
   issn={},
   review={},
}

\bib{dolgachev1}{book}{
   author={Dolgachev, Igor V.},
   title={Classical algebraic geometry},
   note={A modern view},
   publisher={Cambridge University Press, Cambridge},
   date={2012},
   pages={xii+639},
   isbn={978-1-107-01765-8},
   review={\MR{2964027}},
   doi={10.1017/CBO9781139084437},
}
	
\bib{dolgachev2}{article}{
   author={Dolgachev, Igor} 
   author= {Farb,Benson}
    author={Looijenga,Eduard},
   title={Geometry of the Wiman-Edge pencil, I: algebro-geometric aspects},
   journal={Eur. J. Math.},
   volume={4, n.3},
   date={2018},
   pages={879-930},
   issn={},
   review={3851123},
}

\bib{dp}{article}{
   author={Del Pezzo, Pasquale},
   title={Sulle superficie di ordine n immerse nello spazio di n+1 dimensioni},
   journal={Rend.Nap.},
   volume={XXIV},
   date={1885},
   pages={212-216},
   issn={},
   review={},
   }
   
   \bib{schicho}{article}{
   author={Gonz\'{a}lez-S\'{a}nchez, Jon},
   author={Harrison, Michael},
   author={Polo-Blanco, Irene},
   author={Schicho, Josef},
   title={Algorithms for Del Pezzo surfaces of degree 5 (construction,
   parametrization)},
   journal={J. Symbolic Comput.},
   volume={47},
   date={2012},
   number={3},
   pages={342--353},
   issn={0747-7171},
   review={\MR{2869325}},
   doi={10.1016/j.jsc.2011.12.002},
}

\bib{james-liebeck}{book}{
   author={James, Gordon},
   author={Liebeck, Martin},
   title={Representations and characters of groups},
   edition={2},
   publisher={Cambridge University Press, New York},
   date={2001},
   pages={viii+458},
   isbn={0-521-00392-X},
   review={\MR{1864147}},
   doi={10.1017/CBO9780511814532},
}

\end{biblist}
\end{bibdiv}

\end{document}